\newtheorem{thm}{Theorem}[section]
\newtheorem{defn}[thm]{Definition}
\newtheorem{cor}[thm]{Corollary}
\newtheorem{prop}[thm]{Proposition}
\newtheorem{rmk}[thm]{Remark}
\newtheorem{lemma}[thm]{Lemma}
\def\R{\mathbb{R}}
\def\ep{\epsilon}
\def\C{\mathbb{C}}
\newtheorem*{theorem-non}{Theorem}
\def\ep{\epsilon}
\def\R{\mathbb{R}}
\def\Z{\mathbb{Z}}
\def\C{\mathbb{C}}
\def\Q{\mathbb Q}
\def\cal R{\mathcal R}
\title{Symplectic non-convexity of toric domains}
\author{Julien Dardennes}
\email{julien.dardennes@math.univ-toulouse.fr}
\address{Institute of Mathematics, University of Toulouse, 118 route de Narbonne, 31062 Toulouse Cedex 9, France}
\author{Jean Gutt}
\email{jean.gutt@math.univ-toulouse.fr}
\address{Institute of Mathematics, University of Toulouse,  118 route de Narbonne, 31062 Toulouse Cedex 9, France and INU Champollion, Place de Verdun, 81000 Albi, France}
\author{Jun Zhang}
\email{jun.zhang.3@umontreal.ca}
\address{Centre de Recherches Math\'ematiques, University of Montreal, C.P. 6128 Succ. Centre-Ville Montreal, QC H3C 3J7, Canada}
\begin{document}

\maketitle

\begin{abstract}
We investigate the convexity up to symplectomorphism (called symplectic convexity) of star-shaped toric domains in $\R^4$. In particular, based on the criterion from Chaidez-Edtmair in \cite{CD-20} via Ruelle invariant and systolic ratio of the boundary of star-shaped toric domains, we provide elementary operations on domains that can kill the symplectic convexity. These operations only result in $C^0$-small perturbations in terms of domains' volume. Moreover, one of the operations is a systematic way to produce examples of dynamically convex but not symplectically convex toric domains. Finally, we are able to provide concrete bounds for the constants that appear in Chaidez-Edtmair's criterion. 
\end{abstract}


\section{Introduction}
A toric domain $X_{\Omega}$ in $\R^4 (\simeq \C^2)$ is a symplectic manifold with boundary, defined by 
\[ X_{\Omega} := \mu^{-1}(\Omega) \]
where $\mu: \C^2 \to \R_{\geq0}^2$ is the moment map $(z_1, z_2) \to (\pi|z_1|^2, \pi|z_2|^2)$ and $\Omega \subset \R_{\geq 0}^2$ is a domain containing a neighborhood of the origin of $\R^2$. A toric domain $X_{\Omega}$ is called star-shaped if the radial vector field $\sum x_i \frac{\partial}{\partial x_i} + y_i \frac{\partial}{\partial y_i}$ of $\R^4$ is transverse to the boundary $\partial X_{\Omega}$ and intersects $\partial X_{\Omega}$ only once. This is equivalent to the condition, in the toric picture, that all the rays in $\R_{\geq 0}^2$ starting from the origin are transverse to $\partial \Omega$. In this paper, we always assume that $\partial \Omega$ is smooth. For a star-shaped domain $X_{\Omega}$, the boundary $\partial X_{\Omega}$ is a contact manifold with the contact structure given by the standard contact 1-form $\lambda_{\rm std} = \frac{1}{2} \left(\sum x_i dy_i - y_i dx_i\right)$.

\medskip

The study of the convexity of a symplectic toric domain $X_{\Omega}$ has a long history. As (geometric) convexity is not invariant under symplectomorphisms of $\R^4$, various substitutive notions, of symplectic nature, have been introduced. The most fruitful one is dynamical convexity, introduced in \cite{HWZ}, which requires the minimal Conley-Zehnder index of the Reeb orbits on $\partial X_{\Omega}$ has to be sufficiently positive. This serves as a dynamical hypothesis for many interesting problems. In this paper, we are mainly interested in the convexity of the toric domain up to symplectomorphism, for brevity, called {\it symplectic convexity}. A toric domain $X_{\Omega}$ is called {\it symplectically convex} if $\phi(X_{\Omega})$ is (geometrically) convex for some $\phi \in {\rm Symp}(\R^4)$. In \cite{HWZ}, a deep result says that any symplectically convex domain is dynamically convex, while, interestingly, only very recently has it been confirmed that 
\begin{equation} \label{Viterbo-question}
\mbox{dynamical convexity \,\,$\neq$ \,\, symplectic convexity on $S^3$\,\,\,\,by (\cite{CD-20})}.
\end{equation}
Verifying that a given toric domain $X_{\Omega}$ is symplectically convex is difficult since it requires to provide an explicit $\phi \in {\rm Symp}(\R^4)$ and to confirm that the image $\phi(X_{\Omega})$ is indeed convex. In this paper, we are making efforts towards a somewhat opposite direction: try to carry out a minimal amount of operations on a toric domain $X$ so that the resulting domain is {\it not} symplectically convex.

\medskip

To this end, a good characterization of a necessary condition for a toric domain to be symplectically convex is needed. Following Proposition 3.1 in \cite{CD-20}, this can be described as follows. Given a star-shaped domain $X_{\Omega}$, denote $Y: = \partial X_{\Omega}$. If $X_{\Omega}$ is symplectically convex, then
\begin{equation} \label{CD-criterion}
c \leq  {\rm ru}(Y, \lambda)\cdot {\rm sys}(Y,\lambda)^{\frac{1}{2}} \leq C
\end{equation}
where $c$ and $C$ are positive uniform constants (independent of the input $(Y, \lambda)$). The contact form $\lambda$ is the restriction of the standard contact form the boundary: $\lambda = \lambda_{\rm std}|_Y$. The quantities ${\rm ru}(Y,\lambda)$ and ${\rm sys}(Y,\lambda)$ are the {\it Ruelle ratio} and the {\it systolic ratio} of $(Y, \lambda)$ respectively. They are defined by 
\[ {\rm ru}(Y, \lambda) : = \frac{{\rm Ru}(Y, \lambda)}{{\rm Vol}(Y, \lambda)^{\frac{1}{2}}}, \,\,\,\,\mbox{and}\,\,\,\, {\rm sys}(Y, \lambda) = \frac{\mbox{(min period of a Reeb orbit)}^2}{{\rm Vol}(Y, \lambda)},\] 
where ${\rm Ru}(Y, \lambda)$ is the Ruelle invariant of the contact manifold $(Y,\lambda)$. For more details (and a complete definition), we refer to Section \ref{sec-ruelle}. 

It is important to emphasize that even though the ratios ${\rm ru}$ and ${\rm sys}$ are defined only on contact manifolds, here, the boundary of a star-shaped toric domain), they are in fact invariant under symplectomorphisms of $\R^4$ (see Proposition \ref{prop-invariant} in Section \ref{sec-cc} below). Moreover, the normalization by ${\rm Vol}(Y, \lambda)$ guarantees that both ratios ${\rm ru}$ and ${\rm sys}$ are invariant under the rescaling of the contact 1-form $\lambda$. We can, and we shall, therefore talk about the Ruelle ratio and the systolic ratio of a toric domain, that is, 
\[
	{\rm ru}\left(X_{\Omega}\right):={\rm ru}\left(\partial X_{\Omega}, \lambda\right)\,\,\,\,\, \mbox{and} \,\,\,\,\,{\rm sys}\left(X_{\Omega}\right):={\rm sys}\left(\partial X_{\Omega}, \lambda\right).
\]
The applications of the Ruelle invariant, originally defined in \cite{Rue-85}, in symplectic and contact geometry were initiated by \cite{Hutchings-Ruelle}. 

\medskip

The main result of this paper is the following:

\begin{thm} \label{thm-main-1} Given any star-shaped toric domain $X_{\Omega}$ in $\R^4$, there exist $C^0$-small perturbations of $X_{\Omega}$, in terms of the volume, such that the resulting domains $X_{\widehat{\Omega}}$ are still star-shaped but the product ${\rm ru}(X_{\widehat{\Omega}}) \cdot {\rm sys}(X_{\widehat{\Omega}})^{\frac{1}{2}}$ can be arbitrarily small or arbitrarily large. In particular, the resulting domains $X_{\widehat{\Omega}}$ are not symplectically convex. \end{thm}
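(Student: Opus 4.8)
\emph{Reformulation.} With $Y=\partial X_\Omega$ and $\lambda=\lambda_{\rm std}|_Y$, unwinding the definitions gives
\[
{\rm ru}(X_\Omega)\cdot{\rm sys}(X_\Omega)^{\frac12}=\frac{{\rm Ru}(Y,\lambda)}{{\rm Vol}(Y,\lambda)^{\frac12}}\cdot\frac{T_{\min}(Y,\lambda)}{{\rm Vol}(Y,\lambda)^{\frac12}}=\frac{{\rm Ru}(Y,\lambda)\,T_{\min}(Y,\lambda)}{{\rm Vol}(Y,\lambda)},
\]
where $T_{\min}$ is the minimal period of a closed Reeb orbit of $(Y,\lambda)$. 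So it is enough to produce, for any star-shaped $X_\Omega$, two families of star-shaped toric domains $X_{\widehat\Omega}$, each obtained from $X_\Omega$ by an arbitrarily small change of volume, along which ${\rm Ru}\cdot T_{\min}/{\rm Vol}\to 0$, respectively $\to+\infty$; the last sentence of the theorem then follows at once from \eqref{CD-criterion}. I would work entirely in the moment picture, using three facts about $X_\Omega$ (to be set up in Sections~\ref{sec-ruelle} and \ref{sec-cc}): ${\rm Vol}(Y,\lambda)$ is a fixed multiple of $\mathrm{Area}(\Omega)$; a closed Reeb orbit of $Y$ over a point $p\in\partial\Omega$ at which the outward normal is a primitive integer vector $(m,n)$ has period $mp_1+np_2$; and ${\rm Ru}(X_\Omega)$ is controlled by the total turning of the outward normal along $\partial\Omega$ weighted by the support values (so ${\rm Ru}(X_\Omega)$ is the sum of the axial intercepts when $\Omega$ is convex, it is insensitive to bounded-size surgery near $\partial\Omega$, but it grows under thin features that reach far from the origin).

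\emph{Forcing the product to zero.} Since $\Omega$ contains a fixed neighbourhood of the origin, its intercepts are bounded below and cannot be decreased by a small-volume change, so ${\rm Ru}$ stays bounded below and above; the only available mechanism is $T_{\min}\to 0$. Choose a primitive integer vector $(m,n)$ whose kernel line is a ray interior to the first quadrant and close to one of the axes (for instance $(m,n)=(1,-N)$ with $N$ large), and let $p^*$ be the point where that ray meets $\partial\Omega$, so $mp^*_1+np^*_2=0$. I would glue onto $\partial\Omega$, near $p^*$, a thin bump one of whose sides is a genuine straight segment lying on the parallel line $\{mx_1+nx_2=\kappa\}$ for a small $\kappa$ of the correct sign, with outward normal $\pm(m,n)$; every point of that segment then carries a closed Reeb orbit of period $|\kappa|$, so $T_{\min}(X_{\widehat\Omega})\le|\kappa|$. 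One checks by an elementary planar construction that the bump can be made small in all directions — so it changes $\mathrm{Area}(\Omega)$, hence the volume, by $o(1)$ as $\kappa\to0$ — and attached so that every ray from the origin still meets $\partial\widehat\Omega$ exactly once (here the sign of $\kappa$, and whether to use $(m,n)$ or $-(m,n)$, is dictated by which side of that line $\Omega$ lies on near $p^*$), while contributing only $O(1)$ to ${\rm Ru}$. Hence ${\rm Ru}\cdot T_{\min}/{\rm Vol}=O(\kappa)\to0$.

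\emph{Forcing the product to infinity.} Here $T_{\min}$ is bounded above by the intercepts and the volume is essentially fixed, so the only mechanism is ${\rm Ru}\to+\infty$. I would glue onto $\partial\Omega$, at a point $q$ far from the origin, a thin \emph{long} spike of length $M$ pointing straight out — a smoothing of a long thin rectangle capped by a rounded tip — chosen so that both straight sides of the spike have irrational slope. Then: there is no closed Reeb orbit over the two sides; every Reeb orbit over the rounded tip has period $\Theta(M)$ (its integer normal ranges over a semicircle of directions about the spike axis, over a point at distance $\sim M$, and a Dirichlet-type estimate shows the period is $\gtrsim M$); and for a generic position of $q$ the finitely many orbits created in the transition regions have period bounded below independently of $M$. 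Hence $T_{\min}(X_{\widehat\Omega})$ is bounded below, uniformly in $M$, by a positive constant. Taking the width $\varepsilon=\varepsilon(M)$ with $M\varepsilon\to0$ keeps $X_{\widehat\Omega}$ star-shaped and changes the volume by $o(1)$. Finally, traversing the spike makes the outward normal turn by $\Theta(1)$ concentrated at the tip, where the support value is $\sim M$; by the description of ${\rm Ru}$ this adds $\Theta(M)$ to ${\rm Ru}(X_\Omega)$, and orienting the spike suitably makes the added amount positive. Therefore ${\rm Ru}\cdot T_{\min}/{\rm Vol}\gtrsim\Theta(M)\to+\infty$.

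\emph{The main obstacle.} The volume estimates are immediate and the star-shapedness checks, though fussy, are elementary planar geometry; likewise the statement ``a closed Reeb orbit over a primitive-integer-normal point has period equal to the support value'' and the Diophantine bookkeeping of the minimal period are routine once set up. The crux — and the step I expect to consume most of the work — is establishing a sharp enough description of the Ruelle invariant of a \emph{non-convex} star-shaped toric domain in terms of $\partial\Omega$: one needs a formula, or a two-sided estimate with explicit constants, which simultaneously (a) recovers the convex value $({\rm Ru}=\text{sum of intercepts})$, (b) is stable under bounded-size local surgery, so a short bump perturbs ${\rm Ru}$ by $O(1)$, and (c) isolates, with sign, the linear-in-$M$ growth contributed by a long spike. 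Proving this from the definition of the Ruelle invariant together with the explicit linearized Reeb dynamics on $\partial X_\Omega$ is the heart of the matter, and it is also exactly what is needed to pin down the constants $c,C$ in \eqref{CD-criterion} announced in the abstract.
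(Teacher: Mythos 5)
Your reformulation is right, and the first half of your program (making $T_{\min}\to 0$) is in the spirit of the paper's ``strangulation'' operation: the paper cuts a wedge out of $\Omega$ along a ray so that the new boundary passes through a point $(\ep,\ep)$ with outward normal proportional to $(1,1)$, giving an orbit of period $2\ep$, while the intercepts (and hence ${\rm Ru}$) and the volume change by $o(1)$. Your variant of gluing a short flat piece with primitive normal $(1,-N)$ near a ray where that normal annihilates the position vector would also work, modulo the star-shapedness bookkeeping; it is somewhat fussier than removing material as the paper does, but not wrong.

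The second half has a genuine gap, and it comes precisely from the fact that your ``main obstacle'' is not an obstacle: the paper's Proposition~\ref{prop-1} gives an \emph{exact} formula ${\rm Ru}(X_\Omega)=a(\Omega)+b(\Omega)$, the sum of the axial intercepts, valid for \emph{every} star-shaped toric domain, not only convex ones (this is a telescoping integral: the rotation density reduces to $d(\theta_1+\theta_2)/2\pi$ because the linearized flow in the natural frame is a shear, and the integral over $\partial_+\Omega$ collapses to $\int w_2'-w_1'\,ds=a+b$). Your working picture of ${\rm Ru}$ --- ``total turning of the normal weighted by support values,'' which ``grows under thin features that reach far from the origin'' --- is not what the formula says. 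In particular, a long thin spike glued at a point $q$ in the interior of $\partial_+\Omega$, going out distance $M$ and coming back, leaves both intercepts unchanged and therefore leaves ${\rm Ru}(X_{\widehat\Omega})$ \emph{exactly equal} to ${\rm Ru}(X_\Omega)$: the out-and-back contributions cancel in $\int(w_2'-w_1')\,ds$. So your construction does not make ${\rm ru}\cdot{\rm sys}^{1/2}$ large. The fix, which is what the paper's ``strain'' operation does, is to attach the thin triangle at the $w_1$-intercept itself, replacing $a$ by $1/\sqrt{\ep}$; then ${\rm Ru}=b+1/\sqrt\ep\to\infty$ by the exact formula, the added volume is $O(\sqrt\ep)$, and a short explicit action estimate near the new vertex gives $\widehat T_{\min}\geq T_{\min}/2$ uniformly, with no Diophantine or genericity argument needed.

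As a smaller point: once you have Proposition~\ref{prop-1}, your zero-case statement that ${\rm Ru}$ is ``perturbed by $O(1)$'' by the bump should be upgraded to ``unchanged'' (the intercepts are untouched), which also removes the need for the two-sided estimate you flagged as the crux. The real technical content of the paper is exactly that exact Ruelle formula, plus the two elementary action estimates; the constants $c,C$ in \eqref{CD-criterion} are then bounded separately in Theorem~\ref{main-theorem-2} via the Gromov-width inequality for monotone toric domains, not via a Ruelle estimate of the type you describe.
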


The notation $X_{\widehat{\Omega}}$ indicates that the perturbations of $X_{\Omega}$ promised in Theorem \ref{thm-main-1} can be carried on directly on the toric image $\Omega$. Indeed, the proof of Theorem \ref{thm-main-1} in Section \ref{sec-proof-main} provides two explicit constructions of such perturbations. We apply (\ref{CD-criterion}) to deduce that $X_{\widehat{\Omega}}$ is not symplectically convex by deforming $\Omega$ until passing below the lower bound $c$ or over the upper bound $C$, even though we don't know explicitly how big $c$ and $C$ are in general. As expected, there will be non-trivial estimations of the ratios
\[ {\rm ru}\left(X_{\widehat{\Omega}}\right)\,\,\,\,\, \mbox{and} \,\,\,\,\,{\rm sys}\left(X_{\widehat{\Omega}}\right).\]
This is based on results from Section \ref{sec-ruelle} and beginning of Section \ref{sec-proof-main}. As a matter of fact, one construction of the perturbation yields (only) the first ratio to be arbitrarily large, while the other yields (only) the second ratio to be arbitrarily small. 

\begin{rmk} [Related to capacity] The operations in Theorem \ref{thm-main-1} do not always result in small perturbations in terms of symplectic capacities. It is possible to verify that $X_{\widehat{\Omega}}$ from method two - strain in subsection \ref{ssec-strain} - satisfies $X_{\Omega} \hookrightarrow X_{\widehat{\Omega}} \hookrightarrow hX_{\Omega}$ for a rescaling $h$ close to $1$, via symplectic folding technique (see \cite{Schlenk}), where $\hookrightarrow$ represents a symplectic embedding. However, the method one - strangulation in subsection \ref{ssec-strangulation} - is believed to change the symplectic capacities dramatically. How to describe such changes quantitatively in terms of capacities will be explored in further work. \end{rmk}

If one applies the deep result in \cite{HWZ}, symplectic convexity can be killed by breaking the dynamical convexity. In fact, this can be achieved also by a $C^0$-small perturbation in the sense of the volume. More explicitly, one simply  modifies the profile curve of $\Omega$ in $\R_{\geq 0}^2$ near the intersection points on the two axes so that the Conley-Zehnder indices of the Reeb orbits corresponding to these intersection points are always negative. We leave the details to interested readers. 

Here, we emphasize that our operations in Theorem \ref{thm-main-1} are fundamentally different. In particular, method one - strangulation in subsection \ref{ssec-strangulation} - can be distinguished with the operation elaborated above via symplectic capacities (for instance, the minimal action), while method two - strain in subsection \ref{ssec-strain} - can be carried out even within the category of dynamically convex domains. In particular, we have the following useful result. 

\begin{cor} \label{cor-Viterbo} For any dynamically convex toric domain $X_{\Omega}$ in $\R^4$, there exists a $C^0$-small perturbation in terms of the volume such that the resulting domain $X_{\widehat{\Omega}}$ is still dynamically convex but not symplectically convex. \end{cor}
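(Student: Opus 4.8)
The plan is to realize the perturbation through the \emph{strain} construction of subsection~\ref{ssec-strain}, which is precisely the operation that the remark following Theorem~\ref{thm-main-1} claims can be performed inside the class of dynamically convex domains; Corollary~\ref{cor-Viterbo} is the precise form of that remark. Recall that, applied to $X_\Omega$, the strain produces a family of star-shaped toric domains $X_{\widehat{\Omega}}$ with ${\rm Vol}(X_{\widehat{\Omega}}) \to {\rm Vol}(X_\Omega)$, with ${\rm ru}(X_{\widehat{\Omega}})$ bounded, and with the minimal period of a closed Reeb orbit on $\partial X_{\widehat{\Omega}}$ tending to $0$, so that ${\rm sys}(X_{\widehat{\Omega}}) \to 0$ and hence ${\rm ru}(X_{\widehat{\Omega}}) \cdot {\rm sys}(X_{\widehat{\Omega}})^{\frac12} \to 0$; by the lower bound $c$ in (\ref{CD-criterion}), such an $X_{\widehat{\Omega}}$ is not symplectically convex. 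It remains only to show that, when $X_\Omega$ is dynamically convex, the strain can be arranged so that $X_{\widehat{\Omega}}$ is still dynamically convex.

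For this I would use the description of the Reeb dynamics of a smooth toric domain in terms of its profile curve (recalled in Section~\ref{sec-ruelle}): the closed Reeb orbits of $X_\Omega$ consist of the orbit lying over $\partial\Omega \cap \{I_2 = 0\}$, the orbit lying over $\partial\Omega \cap \{I_1 = 0\}$, the Morse--Bott families of orbits lying over the points of $\partial\Omega \cap \R^2_{>0}$ at which the primitive outward normal is rational, together with the iterates of all of these, and $X_\Omega$ is dynamically convex exactly when every such orbit has Conley--Zehnder index at least $3$. The strain modifies $\partial\Omega$ only inside a small region $U \subset \partial\Omega \cap \R^2_{>0}$ disjoint from both coordinate axes (this localization is also what keeps the change $C^0$-small in volume). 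Hence the two axial orbits of $X_{\widehat{\Omega}}$, and all their iterates, coincide with those of $X_\Omega$ and keep index $\ge 3$; every closed Reeb orbit of $X_{\widehat{\Omega}}$ lying over $\partial\widehat{\Omega} \setminus U$ equals a closed Reeb orbit of $X_\Omega$ with the same period and index, and so keeps index $\ge 3$; and the only remaining orbits of $X_{\widehat{\Omega}}$ are those over the rational-normal points of the inserted piece inside $U$ --- which are exactly the new short orbits responsible for driving ${\rm sys}(X_{\widehat{\Omega}})$ to $0$ --- so one must check that these, and all of their iterates, also have Conley--Zehnder index $\ge 3$.

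This last point is the heart of the matter and the main obstacle. A short Reeb orbit tends to have small Conley--Zehnder index --- for instance a nearly flat, or low-winding, piece of profile curve would produce an orbit of index $1$ --- so the inserted piece must be designed so that, although it carries orbits of arbitrarily small period, it is at the same time sufficiently sharply curved that the linearized Reeb flow along those orbits has large transverse rotation, which forces their index, and that of their iterates, to be $\ge 3$. The subtlety is that this sharpness must coexist with the two properties already used in the proof of Theorem~\ref{thm-main-1}: the insertion must be $C^0$-small in volume and must not destroy the bound on ${\rm ru}(X_{\widehat{\Omega}})$. Verifying that one family of insertions can satisfy all three constraints, and carrying out the index bookkeeping for the iterated orbits, is where the real work lies. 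Once this is done, $X_{\widehat{\Omega}}$ is dynamically convex by the case analysis above, is not symplectically convex by the first paragraph, and is a volume-$C^0$-small perturbation of $X_\Omega$, which is the assertion of Corollary~\ref{cor-Viterbo}.
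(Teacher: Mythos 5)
Your proposal has several genuine mischaracterizations that would make it fail as written, and it also misses the key shortcut the paper uses.

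First, you have the wrong model of what the strain operation does. You claim it keeps ${\rm ru}(X_{\widehat{\Omega}})$ bounded and drives ${\rm sys}(X_{\widehat{\Omega}}) \to 0$ by creating closed Reeb orbits of short period, pushing the product below the constant $c$. That is a description of the \emph{strangulation} (method one, subsection \ref{ssec-strangulation}), not the strain. The strain does the opposite: it adds a long thin triangle hugging the $w_1$-axis, which leaves the minimal period bounded away from zero (the new orbits created near the smoothened corner $p=(w_*(\ep),\ep)$ have action $\geq a/2$, and the one over the new $w_1$-intercept has large action $\frac{1}{\sqrt{\ep}}$), while the new $w_1$-intercept $\frac{1}{\sqrt{\ep}}$ blows up the Ruelle invariant by Proposition \ref{prop-1}. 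Thus ${\rm sys}$ stays bounded below and ${\rm ru}\to\infty$, so the product exceeds the upper constant $C$, not falls below $c$. Relatedly, your claim that the strain modifies $\partial\Omega$ only inside a region disjoint from both coordinate axes is false; the strain is by design a modification along the $w_1$-axis.

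Second, and more fundamentally, the whole program of Conley--Zehnder index bookkeeping that you identify as ``the heart of the matter and the main obstacle'' is not carried out in the paper and is not needed. The paper invokes \cite[Proposition 1.8]{HGR}, which says that for toric domains, dynamical convexity is \emph{equivalent} to strict monotonicity of $\Omega$. Under this translation, preserving dynamical convexity becomes a trivial geometric check: for a (strictly) monotone $\Omega$, the slope $k$ near the $w_1$-intercept is negative, so the inserted triangle has negative-slope boundary and the union $\Omega\cup\mathcal T(\ep)$ (after smoothening) is again strictly monotone (this is precisely Remark \ref{rmk-smt}). No linearized-flow winding estimate, no analysis of iterates. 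In your route you would be forced to design ``sharply curved'' insertions with short orbits of high rotation --- a real tension with $C^0$-smallness and with controlling ${\rm ru}$ --- but that tension is an artifact of having picked the wrong operation and the wrong criterion; the actual strain never produces short orbits at all. Your proof sketch, as written, does not close.
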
 

\begin{proof} This directly comes from the construction of strain operation in subsection \ref{ssec-strain}, Remark \ref{rmk-smt}, and \cite[Proposition 1.8]{HGR}. \end{proof}

Note that Corollary \ref{cor-Viterbo} provides a variety of examples that support (\ref{Viterbo-question}). In sharp contrast to the example produced in subsection 1.5 in (\cite{CD-20}) (which is closely related to the one invented in \cite{ABHS-invent}),  Corollary \ref{cor-Viterbo} above is to our best knowledge the first systematic way to produce {\it toric} examples to confirm (\ref{Viterbo-question}).

\medskip

As the proof of Theorem \ref{thm-main-1} is essentially fighting against the constant $c$ and $C$ appearing in (\ref{CD-criterion}), one may be curious about how small or large  these constants are. In general, due to the complexity of the proof of (\ref{CD-criterion}) in \cite{CD-20}, it seems difficult to read off any bounds for $c$ and $C$ directly. However, for a special family of star-shaped domains, called {\it monotone} toric domains (introduced in \cite{HGR}), we are able to estimate the product $ {\rm ru}(X_{\Omega})\cdot {\rm sys}(X_{\Omega})^{\frac{1}{2}}$ by the following result, which yields concrete bounds for $c$ and $C$ in (\ref{CD-criterion}) (cf \cite[Remark 1.11]{CD-20}). 

\begin{thm} \label{main-theorem-2}
For any monotone toric domain $X_{\Omega}$, we have 
\[ {\rm ru}(X_{\Omega})\cdot {\rm sys}(X_{\Omega})^{\frac{1}{2}} \geq \frac{1}{2}.\]
If, furthermore, $X_{\Omega}$ is (geometrically) convex in $\R^4$, then 
\[ {\rm ru}(X_{\Omega})\cdot {\rm sys}(X_{\Omega})^{\frac{1}{2}} \leq 3.\]
In particular, the constant $c$ and $C$ in the criterion (\ref{CD-criterion}) satisfies $c\leq \frac{1}{2}$ and $C \geq 3$, respectively. 
\end{thm}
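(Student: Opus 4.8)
The plan is to substitute into the criterion \eqref{CD-criterion} the explicit descriptions, from Section~\ref{sec-ruelle}, of the Ruelle invariant, the minimal Reeb period and the contact volume of a toric domain, reducing the theorem to two elementary estimates on the planar region $\Omega$. Let $(a,0)$ and $(0,b)$ be the points where $\partial\Omega$ meets the coordinate axes and $f\colon[0,a]\to[0,b]$ the profile function, so $\Omega=\{(x,y):0\le x\le a,\ 0\le y\le f(x)\}$; for a monotone toric domain $f$ is strictly decreasing with $f(0)=b$, $f(a)=0$, and the outward conormal along $\partial\Omega\cap\R^2_{>0}$ lies in the open first quadrant. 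I will use: (i) $\mathrm{Vol}(\partial X_\Omega,\lambda)=2\,\mathrm{area}(\Omega)$; (ii) for a monotone toric domain $\mathrm{Ru}(X_\Omega)\ge a+b$, with a bound of the same order from above when $X_\Omega$ is convex; and (iii) the Reeb period spectrum of $\partial X_\Omega$ is $\{a,b\}$ together with the numbers $mx_0+ny_0$ over those profile points $(x_0,y_0)$ whose primitive outward conormal $(m,n)$ is a positive integer vector. Since $\mathrm{ru}=\mathrm{Ru}/\mathrm{Vol}^{1/2}$ and $\mathrm{sys}^{1/2}=T_{\min}/\mathrm{Vol}^{1/2}$, one has
\[ \mathrm{ru}(X_\Omega)\cdot\mathrm{sys}(X_\Omega)^{1/2}=\frac{\mathrm{Ru}(X_\Omega)\,T_{\min}(X_\Omega)}{\mathrm{Vol}(\partial X_\Omega,\lambda)}=\frac{\mathrm{Ru}(X_\Omega)\,T_{\min}(X_\Omega)}{2\,\mathrm{area}(\Omega)}, \]
so the whole theorem reduces to comparing $T_{\min}(X_\Omega)$ and $\mathrm{area}(\Omega)$ with $a$ and $b$.

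For the lower bound set $\sigma:=\min_{x\in[0,a]}\bigl(x+f(x)\bigr)$, the least value of $x+y$ along $\partial\Omega$. First, $T_{\min}(X_\Omega)\ge\sigma$: since $\sigma\le\min(a,b)$, the two axial periods are $\ge\sigma$; and because the interior conormals $(m,n)$ of a monotone profile have $m,n\ge1$, every interior period satisfies $mx_0+ny_0\ge x_0+y_0\ge\sigma$. Second, $\mathrm{area}(\Omega)\le\sigma\max(a,b)$: choose $x_0$ with $x_0+f(x_0)=\sigma$, so $x_0\in[0,\sigma]$ because $f(x_0)\ge0$; splitting $\int_0^a f$ at $x_0$ and using that $f$ is decreasing gives $f\le b$ on $[0,x_0]$ and $f\le f(x_0)=\sigma-x_0$ on $[x_0,a]$, hence $\mathrm{area}(\Omega)\le bx_0+(\sigma-x_0)(a-x_0)$; the right-hand side is an upward parabola in $x_0$, so on $[0,\sigma]$ it is bounded by its endpoint values $\sigma a$ and $\sigma b$. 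Combining these with $\mathrm{Ru}(X_\Omega)\ge a+b$ from (ii),
\[ \mathrm{ru}(X_\Omega)\cdot\mathrm{sys}(X_\Omega)^{1/2}\ \ge\ \frac{(a+b)\,\sigma}{2\,\sigma\max(a,b)}\ =\ \frac{a+b}{2\max(a,b)}\ \ge\ \frac12. \]

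For the upper bound assume in addition that $X_\Omega$ is geometrically convex. Then $\Omega$ contains the triangle with vertices $0,(a,0),(0,b)$, so $\mathrm{area}(\Omega)\ge\tfrac12 ab$; with $T_{\min}(X_\Omega)\le\min(a,b)$ (the smaller axial period) and the upper bound on $\mathrm{Ru}(X_\Omega)$ from (ii), this gives $\mathrm{ru}(X_\Omega)\cdot\mathrm{sys}(X_\Omega)^{1/2}\le 3$. The assertions $c\le\tfrac12$ and $C\ge3$ then follow by feeding these two bounds --- together with the value of $\mathrm{ru}\cdot\mathrm{sys}^{1/2}$ on explicit geometrically (hence symplectically) convex toric domains --- into \eqref{CD-criterion}: for $c\le\tfrac12$ one uses slightly smoothed thin polydiscs $P(\varepsilon,1)$, for which the computation above gives $\mathrm{ru}\cdot\mathrm{sys}^{1/2}\to\tfrac12$, and $C\ge3$ follows likewise from the convex case.

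I expect the planar estimate $\mathrm{area}(\Omega)\le\sigma\max(a,b)$ to be the real content, the non-convex monotone case being the only genuine difficulty: there $T_{\min}(X_\Omega)$ can be far below $\min(a,b)$ --- for instance when the profile of $\Omega$ drops steeply near one axis --- so one cannot bound the period by $\min(a,b)$ and the area by $ab$ separately; the estimate records exactly that the area collapses in step with $T_{\min}(X_\Omega)$, keeping the product bounded away from $0$. The remaining work is bookkeeping: tracking the normalizations of $\mathrm{Ru}$, $\mathrm{sys}$ and $\mathrm{Vol}$ from Section~\ref{sec-ruelle} so that the final constants are precisely $\tfrac12$ and $3$, and verifying the two structural facts used above --- that for a smooth monotone profile the interior conormals are integer vectors with both entries $\ge1$, and that for a convex toric domain every interior Reeb period is $\ge\min(a,b)$.
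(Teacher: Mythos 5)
Your lower bound is essentially right and in fact more self\nobreakdash-contained than the paper's. Your $\sigma=\min_{x}(x+f(x))$ coincides with the quantity $c_{\rm Gr}(X_\Omega)$ used in Lemma~\ref{lem-vol-gr} (the largest $L$ so that $w_1+w_2=L$ first touches $\overline{\partial_+\Omega}$), and your area estimate ${\rm area}(\Omega)\le\sigma\max(a,b)$ is the content of that lemma. Where the paper invokes \cite[Theorem 1.7]{HGR} to get $T_{\min}=c_{\rm Gr}$, you instead prove $T_{\min}\ge\sigma$ directly from the observation that the primitive integer conormals $(m,n)$ along a (strictly) monotone profile have $m,n\ge1$, which is a nice elementary replacement. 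One caveat, present in the paper too: if the profile is merely monotone (some $\nu_i=0$ at an interior point), a conormal of the form $(0,1)$ produces a closed orbit of period $y_0$ that need not dominate $\sigma$, so the clean inequality really needs strict monotonicity or a limiting argument. Also, Proposition~\ref{prop-1} gives ${\rm Ru}(X_\Omega)=a+b$ exactly, so there is nothing to bound from above or below there.

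The upper bound has a genuine gap. You claim that geometric convexity of $X_\Omega$ forces $\Omega$ to contain the triangle with vertices $(0,0)$, $(a,0)$, $(0,b)$, hence ${\rm area}(\Omega)\ge\frac12 ab$. That is false: convexity of $X_\Omega$ is equivalent (via \cite[Proposition 2.3]{HGR}) to convexity of the \emph{radial} region $\widetilde\Omega=\{\mu:(\pi\mu_1^2,\pi\mu_2^2)\in\Omega\}$, not of $\Omega$ itself, and the change of variables $w_i=\mu_i^2$ typically turns a concave profile in the $\mu$-plane into a \emph{convex} one in the $w$-plane, so $\Omega$ lies \emph{below} the chord. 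The domain with profile $w_2=b(1-\sqrt{w_1})^2$ (i.e.\ $\widetilde\Omega$ a right triangle) is geometrically convex, has $a=1$, ${\rm area}(\Omega)=\tfrac{b}{6}$, $T_{\min}=c_{\rm Gr}=\tfrac{b}{1+b}$, and gives ${\rm ru}\cdot{\rm sys}^{1/2}=3$; this is precisely the extremal case of the paper, and it violates your ${\rm area}\ge\frac12 ab$ by a factor of $3$. The crude pair of bounds $T_{\min}\le\min(a,b)$ and ${\rm area}\ge\frac12 ab$ would in fact give $\le 2$, which is false. The paper's route — pass to $\widetilde\Omega$, use concavity of $g$ there, and explicitly minimize the volume over all convex profiles with prescribed intercepts and Gromov width — is what produces the sharp constant $3$, and some version of that optimization seems unavoidable.
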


Here, monotone means that the outward normal vectors $\nu=(\nu_1,\nu_2)$ at any point of the boundary component $\partial_+\Omega:=\partial \Omega \cap \R_{>0}^2$ have non-negative components ($\nu_1\geq0, \nu_2\geq0$). This is more general than the classical notion of convex or concave toric domains and admit many nice properties. For instance, by \cite[Proposition 1.8]{HGR}, the category of (strictly) monotone toric domains coincides with the category of dynamically convex domains. For another instance, due to \cite[Theorem 1.7]{HGR}, all normalized symplectic capacities agree on strictly monotone toric domains. 

\begin{rmk}
The proof of Theorem \ref{thm-main-1}, strain operation in subsection \ref{ssec-strain}, shows that within the category of strict monotone toric domains, there is no upper bound for the constant $C$ in (\ref{CD-criterion}), see Remark \ref{rmk-smt}. On the other hand, the lower bound $\frac{1}{2}$ in Theorem \ref{main-theorem-2} can be arbitrarily approximated. Indeed, consider the family of polydisks $P(a,b)$ with $b \to \infty$, we have 
\[ {\rm ru}(P(a,b)) \cdot {\rm sys}(P(a,b))^{\frac{1}{2}}  = \frac{(a+b)}{\sqrt{2ab}} \cdot \frac{a}{\sqrt{2ab}} = \frac{(a+b)}{2b} \to \frac{1}{2} \]
as $b \to \infty$. 
\end{rmk}

\begin{rmk} One may generalize the criterion (\ref{CD-criterion}) to characterize the geometric convexity of higher dimensional domains, where both the Ruelle invariant and systolic ratio are well-defined (see \cite{CD-higher}). Then, even if the constant $c$ and $C$ may change, our method in proving Theorem \ref{thm-main-1} can easily be adapted to the higher dimensional case.  
\end{rmk}

\noindent {\bf Acknowledgement} This work was completed while the third author held a CRM-ISM Postdoctoral Research Fellowship at the Centre de recherches math\'ematiques in Montr\'eal. He thanks this Institute for its warm hospitality. Part of Section 2 is derived from the project \cite{HZ-BM}, so he thanks Richard Hind for fruitful discussions. The second author was partially supported by a CIMI grant and the ANR COSY (ANR-21-CE40-0002) grant. We also thank Julian Chaidez for his comments on the initial version of the paper, in particular for the statement of Corollary \ref{cor-Viterbo}. 

\section{Ruelle invariant} \label{sec-ruelle}

For simplicity, let us assume our closed contact 3-manifold $(Y,\lambda)$ is a homology $3$-sphere. To the contact structure $\xi = \ker \lambda$, we can associate a real number $\text{Ru}(Y,\lambda)$ as follows. Consider the linearized Reeb flow at a point $y \in Y$ for time $t$ with respect to a trivialization $\tau$ which we will denote $\Phi^\tau_{y,t}$. For a real $T\geq 0$, the path $\Phi=\{\Phi^\tau_{y,t}\}_{t\in[0,T]}$ defines an element of the universal cover of the symplectic group $\widetilde{{\rm Sp}}(2)$. Together with the rotation number, $\rho:\widetilde{{\rm Sp}}(2)\rightarrow\mathbb{R}$ , this yields a real number $\rho(y,T,\tau):=\rho(\{\Phi^\tau_{y,t}\}_{t\in[0,T]})$ and the following limit
\begin{equation} \label{dfn-rot}
{\rm rot}(y)=\lim\limits_{T \rightarrow +\infty} \frac{\rho(y,T,\tau)}{T}
\end{equation}
is well-defined. In particular, ${\rm rot}(y)$ is independent of the trivialization $\tau$, since here, $Y$ has a unique trivialization up to homotopy. In general ${\rm rot}(y)$ only depends on the homotopy class of a trivialization. Moreover, by \cite{Rue-85}, we have that ${\rm rot}(y)$ is an integrable function. 

\begin{defn}\label{dfn-ruelle}
Suppose the closed contact 3-manifold $(Y,\lambda)$ is a homology $3$-sphere, then its Ruelle invariant is defined by 
$${\rm Ru}(Y,\lambda): =\int_Y {\rm rot}(y)\, \lambda\wedge d\lambda.$$
In particular, if $X$ is a star-shaped domain in $\mathbb{R}^4$, then we define $${\rm Ru}(X):={\rm Ru}(\partial X,\lambda_{\rm std}|_{\partial X})$$
\end{defn}

The following is the main result of this section.

\begin{prop} \label{prop-1} Let $X_{\Omega}$ be any 4-dimensional toric star-shaped domain. Then its Ruelle invariant is given by
\[ {\rm Ru}(X_{\Omega}) = a(\Omega) + b(\Omega) \]
where $a(\Omega)$ and $b(\Omega)$ are the $w_1$-intercept and $w_2$-intercept, respectively, of the moment image $\Omega$ in $\R_{\geq 0}^2$, in $(w_1, w_2)$-coordinate. \end{prop}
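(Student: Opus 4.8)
The plan is to compute the density ${\rm rot}(y)$ of Definition \ref{dfn-ruelle} explicitly over the open dense part of $Y:=\partial X_\Omega$ and then to evaluate the integral in toric coordinates, where the weight factors cancel and the answer telescopes to the two intercepts. First I set up coordinates: write $z_j=\rho_je^{i\theta_j}$, so that $w_j=\pi\rho_j^2$ and $\lambda:=\lambda_{\rm std}|_Y=\tfrac1{2\pi}(w_1\,d\theta_1+w_2\,d\theta_2)$. Parametrize $\partial_+\Omega$ by $s\mapsto(w_1(s),w_2(s))$ running from the $w_1$-intercept $(a(\Omega),0)$ to the $w_2$-intercept $(0,b(\Omega))$, with outward normal $\nu=(\nu_1,\nu_2):=(w_2',-w_1')$; over the relative interior $\partial_+^{\circ}\Omega$ of $\partial_+\Omega$ the manifold $Y$ is a $T^2$-bundle with fiber coordinates $(\theta_1,\theta_2)$, and the part of $Y$ \emph{not} lying over $\partial_+^{\circ}\Omega$ consists exactly of the two closed Reeb orbits over the two intercepts, which are $\lambda\wedge d\lambda$-null and therefore immaterial for the integral. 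A short computation, using $\nu\cdot\tau=0$ along $\partial_+\Omega$ and $w\cdot\nu>0$ (star-shapedness), gives the Reeb vector field $R=\tfrac{2\pi}{w\cdot\nu}(\nu_1\partial_{\theta_1}+\nu_2\partial_{\theta_2})$; hence the Reeb flow preserves every torus $T^2_p$ over $p\in\partial_+^{\circ}\Omega$ and is the linear flow $\dot\theta_j=c(s)\,\nu_j$ there, with $c=2\pi/(w\cdot\nu)$.

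The heart of the proof is to establish
\[
{\rm rot}(y)=\frac{\nu_1+\nu_2}{w\cdot\nu}\qquad\text{for every }y\text{ lying over }\partial_+^{\circ}\Omega .
\]
Over a fixed torus $T^2_p$ the contact plane $\xi=\ker\lambda$ is spanned by the $\theta$-independent ``toric frame'' $\{\tilde\tau,e_2\}$, with $\tilde\tau$ the lift of $\partial_s$ having no $\theta$-component and $e_2=w_2\partial_{\theta_1}-w_1\partial_{\theta_2}$ (one checks $\lambda(\tilde\tau)=\lambda(e_2)=0$ and that both are tangent to $Y$). In this toric frame the linearized Reeb flow $d\phi_t$ is unipotent: it fixes $\partial_{\theta_1},\partial_{\theta_2}$ and sends $\tilde\tau\mapsto\tilde\tau+t\,\mu(s)\,e_2$, the $\partial_\theta$-part being forced into $\ker\lambda$, hence proportional to $e_2$, because $\phi_t^{\ast}\lambda=\lambda$; so its asymptotic rotation number \emph{in the toric frame} vanishes. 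The toric frame, however, does not extend over the two degenerate Reeb circles and is therefore not an admissible global trivialization; the genuine ${\rm rot}(y)$ is obtained by correcting by the winding of an honest global trivialization $\tau_0$ of $\xi\to Y\cong S^3$. The change of frame $\tau_0/\tilde\tau$ restricted to $T^2_p$ is a loop in the oriented frame bundle of $\xi|_{T^2_p}$, with winding numbers $(k_1,k_2)\in\Z^2$ about the two circle factors; since $(k_1,k_2)$ is locally constant in $p$ and deforms continuously with $\Omega$, it may be computed once and for all on $\partial B(a)$, where $\xi$ is a genuine complex line and one may take $\tau_0=(-\bar z_2,\bar z_1)/|z|$, yielding $(k_1,k_2)=(-1,-1)$. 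Consequently $\arg(\tau_0/\tilde\tau)$ changes at rate $-c(\nu_1+\nu_2)$ along the Reeb flow, and combining this with the vanishing toric-frame contribution produces the displayed formula. As checks, it gives ${\rm rot}\equiv 2/a$ on $\partial B(a)$ and ${\rm rot}=1/a$ (resp. $1/b$) over the two edges of a polydisk $P(a,b)$, consistent with ${\rm Ru}(B(a))=2a$ and ${\rm Ru}(P(a,b))=a+b$. I expect pinning down $(k_1,k_2)$ to be the main obstacle, precisely because $\xi$ fails to be $J_0$-invariant for non-round toric domains while the convenient toric frame is only local; as alternatives, one can invoke the rotation-number computations already implicit in \cite{CD-20} or \cite{Hutchings-Ruelle}, or reduce to the ellipsoid --- where the linearized Reeb flow is literally ${\rm diag}(e^{2\pi it/a},e^{2\pi it/b})$ --- using continuity of ${\rm Ru}$.

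Finally I carry out the integral. Pushing $\lambda\wedge d\lambda$ forward to $\partial_+^{\circ}\Omega\times T^2$ gives $\lambda\wedge d\lambda=\tfrac1{4\pi^2}(w\cdot\nu)\,|ds\wedge d\theta_1\wedge d\theta_2|$, so by Definition \ref{dfn-ruelle},
\[
{\rm Ru}(X_\Omega)=\int_Y{\rm rot}(y)\,\lambda\wedge d\lambda=\int_{\partial_+\Omega}\!\int_{T^2}\frac{\nu_1+\nu_2}{w\cdot\nu}\cdot\frac{w\cdot\nu}{4\pi^2}\;d\theta_1\,d\theta_2\,ds=\int_{\partial_+\Omega}(\nu_1+\nu_2)\,ds .
\]
The cancellation of the weight $w\cdot\nu$ is exactly what makes the computation work. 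Since $(\nu_1+\nu_2)\,ds=(w_2'-w_1')\,ds=d(w_2-w_1)$ along $\partial_+\Omega$, the last integral telescopes to $\big[w_2-w_1\big]_{(a(\Omega),0)}^{(0,b(\Omega))}=b(\Omega)+a(\Omega)$, which is the asserted formula; orientation signs are to be tracked throughout but do not change the final value.
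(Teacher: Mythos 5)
Your proposal follows essentially the same strategy as the paper's proof: identify a $\theta$-invariant local ``toric frame'' of $\xi$ over the torus fibers, show the linearized Reeb flow is unipotent in that frame (so contributes zero asymptotic rotation), correct by the winding of a bona fide global trivialization relative to the toric frame, and then evaluate the resulting integral $\int_{\partial_+\Omega}(\nu_1+\nu_2)\,ds$, which telescopes to $a(\Omega)+b(\Omega)$ after the factor $w\cdot\nu$ cancels between the density and the volume form.

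The two treatments diverge slightly in how they fill the two nontrivial steps. For the unipotence of the linearized flow, the paper carries out an explicit matrix computation (their Lemma~2.3) whereas you argue conceptually that the $\partial_\theta$-part of $d\phi_t(\tilde\tau)$ must lie in $\ker\lambda$ and hence be a multiple of $e_2$; both are correct, but the paper's explicit computation also verifies the off-diagonal entry is linear in $t$, which your sketch asserts but does not quite derive. For the trivialization correction, the paper appeals to the analogous argument in Hutchings's paper to conclude that the rotation rate equals $\frac{(d\theta_1+d\theta_2)(R)}{2\pi}$, while you pin down the winding numbers $(k_1,k_2)=(-1,-1)$ of a global trivialization $\tau_0$ relative to the toric frame by a deformation argument reducing to the round ball. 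Your deformation argument is sound (the winding numbers are integers, independent of the choice of $\tau_0$ since $Y\simeq S^3$ is simply connected, locally constant over the connected set $\partial_+^\circ\Omega$, and continuous in $\Omega$), and is a reasonable alternative to citing Hutchings; it is also the point you correctly flag as the most delicate. Orientation and sign bookkeeping (e.g.\ your outward normal $\nu=(w_2',-w_1')$ versus the normalized one written in the paper, and the sign in $\lambda\wedge d\lambda$) are consistent modulo conventions and do not affect the final value.
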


\begin{rmk} Proposition \ref{prop-1} generalizes  \cite[Proposition 1.11]{Hutchings-Ruelle} since there is no hypothesis that the profile curve, as the boundary $\partial \Omega \cap \R^2_{>0}$, has slopes everywhere negative (cf.~\cite[footnote on page 6]{Hutchings-Ruelle}). \end{rmk}

The proof of Proposition \ref{prop-1} occupies the rest of this section.

\subsection{Linearized Reeb flow} Denote by $\partial_+ \Omega := \partial \Omega \cap \R_{>0}^2$.  For any point $p = (w_1, w_2) \in \partial_+ \Omega$, consider the polar coordinate $(w_1, \theta_1, w_2, \theta_2)$. Then (recall that $\mu$ is the moment map) for any $z \in \mu^{-1}(p)$, one can verify that the Reeb vector field $R$ is
\begin{equation} \label{reeb-p}
R(z) = \frac{2\pi}{\nu_1(p) w_1 + \nu_2(p) w_2} \left(\nu_1(p)  \frac{\partial}{\partial{\theta_1}} + \nu_2(p)  \frac{\partial}{\partial{\theta_2}}\right)
\end{equation}
where $(\nu_1(p), \nu_2(p))$ is the unit normal vector of $\partial_+ \Omega$ at point $p$, pointing outward of $\Omega$. Note that $\nu_1(p) w_1 + \nu_2(p) w_2>0$ for any $p \in \partial_+ \Omega$ due to our hypothesis that $X_{\Omega}$ is star-shaped. Moreover, the contact 2-plane at $z$ is given by, 
\begin{equation} \label{xi-z}
\xi_z = \left\{ a_1 \frac{\partial}{\partial w_1} + b_1 \frac{\partial}{\partial \theta_1} + a_2 \frac{\partial}{\partial w_2} + b_2 \frac{\partial}{\partial \theta_2}\,\bigg| \, \begin{array}{c} \nu_1(p)a_1+\nu_2(p) a_2 = 0 \\ w_1 b_1 + w_2 b_2 = 0 \end{array} \right\},
\end{equation}
and one can choose a basis of $\xi_z$ as follows, 
\begin{equation} \label{basis}
e_1(p) = - \nu_2(p) \frac{\partial}{\partial w_1} + \nu_1(p) \frac{\partial}{\partial w_2}\,\,\,\,\,\mbox{and}\,\,\,\,\, e_2(p) = - w_2 \frac{\partial}{\partial \theta_1}  + w_1\frac{\partial}{\partial \theta_2}. 
\end{equation}
Note that $(e_1(p), e_2(p))$ is an {\it ordered} basis in that $(\omega_{\rm std})_z(e_1(p), e_2(p))>0$. Using this basis, along any Reeb trajectory $\gamma = (\gamma(t))_{t \in [0,T]}$, one can chose a trivialization $\tau: \gamma^*\xi \to \gamma\times \R^2$ explicitly defined as follows. For any $(z, v) \in (\gamma^*\xi)_z$ where $z \in \gamma$ and $v \in \xi_z$,  
\begin{equation} \label{trivialization-2} 
\tau(p)((z,v)) = (z, (v_R, v_{\theta})) \,\,\,\,\mbox{where $v = v_R e_1(p) + v_{\theta} e_2(p)$}. 
\end{equation}
Moreover, under this trivialization, the differentials of the Reeb flow along the trajectory $\gamma$ form a path in ${\rm Sp}(2)$, denoted by $\Phi$. The following lemma is crucial. 

\begin{lemma} \label{lemma-1} With respect to the trivialization given in (\ref{trivialization-2}), along the Reeb trajectory $\gamma = (\gamma(t))_{t \in [0,T]}$ the resulting path $\Phi$ in ${\rm Sp}(2)$ from the differentials of the Reeb flow is 
\[ \Phi = \left\{\begin{pmatrix} 1 & 0 \\ f(t) & 1 \end{pmatrix}\,\bigg| \, t \in [0, T] \right\}\]
where $f(t)$ is a linear function of $t$ depending only on $\gamma(0)$ and $e_1(\mu(\gamma(0))$ in (\ref{basis}). In particular, ${\rm rot}_{\tau}(\gamma(0), T) = 0$.  \end{lemma}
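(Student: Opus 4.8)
The plan is to compute the linearized Reeb flow directly in the coordinates $(w_1,\theta_1,w_2,\theta_2)$ using the explicit formula \eqref{reeb-p} for the Reeb vector field, and then read off its action on the basis $(e_1(p),e_2(p))$ of $\xi_z$ given in \eqref{basis}. The key observation is that, because $X_\Omega$ is toric, the Reeb flow is especially simple: along the orbit through $z$, the coordinates $w_1,w_2$ are constant (the flow stays in the torus fiber $\mu^{-1}(p)$), and $\theta_1,\theta_2$ evolve linearly in $t$ with constant angular speeds $\frac{2\pi\nu_i(p)}{\nu_1(p)w_1+\nu_2(p)w_2}$. First I would linearize: a nearby orbit starts at a point with moment image $p' = p + \delta p$ near $p$, and its angular speeds depend on $p'$ through $\nu(p')$ and through the denominator. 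Differentiating in the $\delta p$-direction that lies along $\partial_+\Omega$ — i.e.\ the direction $e_1(p)$, which by \eqref{basis} is tangent to the profile curve — produces a first-order change in the angular velocities, hence a drift in $(\theta_1,\theta_2)$ that grows linearly in $t$. This is exactly the lower-triangular shear: the $w$-component of a tangent vector is preserved (orbits with the same $w$ stay at the same $w$), while it feeds linearly into the $\theta$-component.

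The main computational step is to verify that the $e_1$-component is genuinely preserved (no diagonal or upper-triangular terms appear) and that the induced shift in the $e_2$-direction is $f(t) = c\,t$ with $c$ an explicit constant determined by $\gamma(0)$ and the derivative of the angular-speed function along $e_1(\mu(\gamma(0)))$. Concretely, I would write the time-$t$ Reeb flow map $\psi_t$ on a neighborhood in $\mu^{-1}(\text{nbhd of }p)$, take $d\psi_t$ at $z$, restrict to $\xi_z$, and express the result in the basis $(e_1,e_2)$; the star-shapedness hypothesis ($\nu_1 w_1 + \nu_2 w_2 > 0$) guarantees the denominator never vanishes so everything is smooth. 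One has to be slightly careful that $e_1(p)$ really is the image under $\mu_*$ of a vector tangent to $\partial_+\Omega$ and that $e_2(p)$ spans the $\theta$-part of $\xi_z$ — both are immediate from \eqref{xi-z}–\eqref{basis}. Since $\xi$ is preserved by the Reeb flow and the flow is linear on angles, no higher-order-in-$t$ terms can appear, which pins the path down to the stated one-parameter family of shears.

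Finally, for the rotation-number conclusion: a path of the form $\left\{\begin{pmatrix} 1 & 0 \\ f(t) & 1\end{pmatrix}\right\}_{t\in[0,T]}$ is a straight line in the subgroup of lower-triangular unipotent matrices, which is contractible and lies in the "boundary" stratum where the rotation number $\rho$ is constant; since it starts at the identity, $\rho$ of the whole path is $0$, so ${\rm rot}_\tau(\gamma(0),T) = 0$. I expect the main obstacle to be purely bookkeeping: carefully differentiating the angular-speed function $p \mapsto \frac{2\pi\nu_i(p)}{\nu_1(p)w_1+\nu_2(p)w_2}$ in the $e_1$-direction and checking that the resulting $2\times 2$ matrix, after the change of basis to $(e_1,e_2)$, has exactly the claimed shape with the off-diagonal entry linear in $t$ — in particular that the contributions from varying $\nu$ and from varying the denominator combine correctly and that there is no contribution to the $e_1$-component. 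The identity $\nu_1 e_1^{(w_1)} + \nu_2 e_1^{(w_2)} = 0$ built into \eqref{basis} is what makes the upper-left entry stay $1$ and the upper-right entry stay $0$.
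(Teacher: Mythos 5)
Your approach is essentially the same as the paper's: linearize the explicit torus-coordinate Reeb flow (which fixes $w$ and drifts $\theta$ at a $w$-dependent angular speed), read off the differential in the basis $(e_1,e_2)$, and observe that the only nonzero off-diagonal entry is the $e_1\to e_2$ shear growing linearly in $t$, so the rotation number vanishes. The paper carries this computation out explicitly to extract the coefficient $A(p;e_1(p))$, and your remark that the relation $\nu_1(p)a_1+\nu_2(p)a_2=0$ from \eqref{xi-z} kills the first-order variation of the denominator is precisely the simplification the paper uses.
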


\begin{proof} Suppose $\gamma(0) \in \mu^{-1}(p)$ for some $p \in \partial_+ \Omega$. For $v \in \xi_{\gamma(0)}$ and any $t \in [0,T]$, to compute the differential $(d\phi_{R}^t|_{\xi_{\gamma(0)}})(v)$, we need to take a locally defined smooth path $r(s): (-\ep, \ep) \to \partial X_{\Omega}$ for $\ep$ sufficiently small such that $r(0) = \gamma(0)$ and $r'(0) = v$. Denote for brevity $r(s) = (w_1(s), \theta_1(s), w_2(s), \theta_2(s))$ where $w_1(0) =w_1$ and $w_2(0) = w_2$. For any $s \in (-\ep, \ep)$, by (\ref{basis}) and (\ref{trivialization-2}), 
\begin{align*}
r(s) &= r(0) + sv + o(s)\\
& = (w_1 - s\nu_2(p) v_R,\, \theta_1 - s w_2 v_{\theta},\, w_2 + s \nu_1(p) v_R, \,\theta_2 + s w_1 v_{\theta}) + o(s).
\end{align*}
Note that the approximation term $o(s)$ exist to guarantee that $r(s) \in \partial X_{\Omega}$. Then, by (\ref{reeb-p}), we have 
\begin{align*}
\phi_{R}^t(r(s)) & = \phi_{R_{\alpha}}^t(w_1(s), \theta_1(s), w_2(s), \theta_2(s)) \\
& = (w_1(s), \theta_1(s) + \Theta_1(s)\cdot t, w_2(s), \theta_2(s) + \Theta_2(s) \cdot t)
\end{align*}
where 
\[   \Theta_1(s) = \frac{2\pi \nu_1(p(s))}{\nu_1(p(s)) w_1(s) + \nu_2(p(s)) w_2(s)} \,\,\,\,\mbox{and}\,\,\,\, \Theta_2(s) = \frac{2\pi \nu_2(p(s))}{\nu_1(p(s)) w_1(s) + \nu_2(p(s)) w_2(s)},\]
where by notation $p(s) = (w_1(s), w_2(s))$. Observe that the denominator of $\Theta_1(s)$ and $\Theta_2(s)$ can be simplified as follows, 
\[ \nu_1(p(s)) w_1(s) + \nu_2(p(s)) w_2(s) = \nu_1(p(s)) w_1 + \nu_2(p(s)) w_2 + o(s). \]
In particular, it converges to $\nu_1(p) w_1 + \nu_2(p) w_2$ as $s \to 0$. Then, by the definition of a differential and computations above, 
\begin{align*}
(d\phi_{R}^t)(v) & = \lim_{s \to 0} \frac{\phi_{R}^t(r(s)) - \phi_{R}^t(r(0))}{s}\\
& = (-\nu_2(p) v_R, - w_2 v_{\theta}, \nu_1(p) v_R, w_1 v_{\theta}) \\
& + \left(0, \,\lim_{s \to 0} \frac{\Theta_1(s) - \frac{2\pi \nu_1(p)}{\nu_1(p) w_1 + \nu_2(p) w_2}}{s} \cdot t, \,0, \,\lim_{s\to0} \frac{\Theta_2(s) - \frac{2\pi \nu_2(p)}{\nu_1(p) w_1 + \nu_2(p) w_2}}{s} \cdot t\right).
\end{align*}
Meanwhile, further simplifications yield 
\begin{equation} \label{factor-1}
 \lim_{s \to 0} \frac{\Theta_1(s) - \frac{2\pi \nu_1(p)}{\nu_1(p) w_1 + \nu_2(p) w_2}}{s} = 2 \pi \cdot \frac{(\nu_1(p) \nu_2(p(s))'|_{s=0} - \nu_1(p(s))'|_{s=0} \nu_2(p))(-w_2)}{(\nu_1(p) w_1 + \nu_2(p) w_2)^2}, 
\end{equation}
and similarly, 
\begin{equation} \label{factor-2}
 \lim_{s \to 0} \frac{\Theta_2(s) - \frac{2\pi \nu_2(p)}{\nu_1(p) w_1 + \nu_2(p) w_2}}{s} = 2 \pi \cdot\frac{(\nu_1(p) \nu_2(p(s))'|_{s=0} - \nu_1(p(s))'|_{s=0} \nu_2(p))w_1}{(\nu_1(p) w_1 + \nu_2(p) w_2)^2}, 
 \end{equation}
where the $\nu_i(p(s))'|_{s=0}$ denotes the derivative with respect to the variable $s$ and then evaluated at $s=0$. For brevity, denote by 
\[ A(p; v): = 2 \pi \cdot \frac{\nu_1(p) \nu_2(p(s))'|_{s=0} - \nu_1(p(s))'|_{s=0} \nu_2(p)}{(\nu_1(p) w_1 + \nu_2(p) w_2)^2},\]
the common factor in (\ref{factor-1}) and (\ref{factor-2}). Then 
\begin{equation} \label{diff}
(d\phi_{R_{\alpha}}^t)(v) = (-\nu_2(p) v_R, (A(p; v) + v_{\theta})(-w_2)t, \nu_1(p) v_R, (A(p; v) + v_{\theta}) w_1 t). 
\end{equation}
In particular, 
\[ d\phi_{R}^t(e_1(p)) = e_1(p) + (A(p; e_1(p))t) e_2(p)\,\,\,\,\mbox{and}\,\,\,\, d\phi_{R}^t(v) = e_2(p). \]
Representing this by a matrix with respect to the basis $(e_1(p), e_2(p))$, one gets that
\begin{equation} \label{matrix}
d\phi_{R}^t|_{\xi_{\gamma(0)}} = \begin{pmatrix} 1 & 0 \\ A(p; e_1(p)) t & 1 \end{pmatrix}.
\end{equation}
Thus we prove the first conclusion by setting $f(t) := A(p; e_1(p)) t$. Moreover, the second conclusion is straightforward, since each matrix representation of the differential $d\phi_{R}^t|_{\xi_z}$ as in (\ref{matrix}) is similar to a shear matrix, which does not contribute any rotations. 
\end{proof}

\subsection{Proof of Proposition \ref{prop-1}}

Note that the trivialization in (\ref{trivialization-2}) does not extend to the entire $\partial X_{\Omega}$ (since the polar coordinate is not well-defined for the points where $w_1=0$ or $w_2=0$). However, for any globally defined trivialization $\bar{\tau}$, by Lemma \ref{lemma-1}, the rotation number at point $z \in \mu^{-1}(p)$ for $p \in \partial_+ \Omega$ is then only counting how much the function $\theta_1 + \theta_2$ changes along the Reeb flow, in other words, by (\ref{reeb-p}) and a similar argument as in \cite[top of page 10]{Hutchings-Ruelle}, we have 
\[ \rho(z) = \lim_{T \to \infty} \frac{{\rm rot}_{\bar{\tau}}(z, T)}{T} = \frac{(d\theta_1 + d\theta_2)(R_{\alpha}(z))}{2\pi} = \frac{\nu_1(p) + \nu_2(p)}{\nu_1(p) w_1 + \nu_2(p) w_2}\]
 where $p = (w_1, w_2)$. Note that $\rho(z)$ is in fact a function of $p \in \partial_+ \Omega$. Then by the definition of Ruelle invariant, 
\begin{align*}
{\rm Ru}(X_{\Omega}) & = \int_{\partial X_{\Omega}} \rho(z) \alpha \wedge d\alpha \\
& = \int_{\partial_+ \Omega} \frac{\nu_1(p) + \nu_2(p)}{\nu_1(p) w_1 + \nu_2(p) w_2} (w_1 dw_2 - w_2 dw_1). 
\end{align*}
Suppose the profile curve $\overline{\partial_+ \Omega}$ is parametrized by $\{(w_1(s), w_2(s))\}_{s \in [0,1]}$ such that 
\[ w_1(0) = a(\Omega), \,\, w_1(1) = 0 \,\,\,\,\mbox{and}\,\,\,\, w_2(0) = 0, \,\, w_2(1) = b(\Omega), \]
where $a(\Omega)$ and $b(\Omega)$ are the $w_1$-intercept and $w_2$-intercept. Moreover, by a change of variable, $w_1 dw_2 - w_2 dw_1 = (w_1(s) w'_2(s) - w_2(s) w'_1(s)) ds$. Meanwhile, observe that 
\begin{equation} \label{normal}
(\nu_1(p), \nu_2(p)) = (\nu_1(s), \nu_2(s)) = \left( \frac{-w'_2(s)}{\sqrt{|w'_1(s)|^2 + |w'_2(s)|^2}}, \frac{w'_1(s)}{\sqrt{|w'_1(s)|^2 + |w'_2(s)|^2}} \right).
\end{equation}
Therefore, by (\ref{normal}), 
\begin{align*}
{\rm Ru}(X_{\Omega}) &= \int_0^1 \frac{\nu_1(s) +\nu_2(s)}{\nu_1(s) w_1(s) + \nu_2(s) w_2(s)}(w_1(s) w'_2(s) - w_2(s) w'_1(s)) ds\\
& = \int_0^1 \frac{-w'_2(s) + w'_1(s)}{-w'_2(s) w_1(s) + w'_1(s) w_2(s)} (w_1(s) w'_2(s) - w_2(s) w'_1(s)) ds\\
& = \int_0^1 w'_2(s) - w'_1(s) ds\\
& = (w_2(1) - w_2(0)) -  (w_1(1) - w_1(0)) = b(\Omega) + a(\Omega). 
\end{align*}
Therefore, we obtain the desired conclusion.

\section{Convex criterion revisited} \label{sec-cc}

In this section, we prove that the convex criterion (\ref{CD-criterion}) is invariant under symplectomorphisms of $\R^4$. For brevity, in what follows in this section, $\lambda$ denotes the standard primitive $\lambda_{\rm std}$ of the standard symplectic structure $\omega_{\rm std}$ on $\R^4$. We have the following property. 

\begin{prop} \label{prop-invariant} Let $X$ and $X'$ be two star-shaped domains in $\R^4$. If $X$ and $X'$ are symplectomorphic, then ${\rm ru}(X) = {\rm ru}(X')$ and ${\rm sys}(X) = {\rm sys}(X')$. \end{prop}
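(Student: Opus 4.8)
The plan is to verify invariance of each ratio separately, using the fact that both ${\rm ru}$ and ${\rm sys}$ are built from data intrinsic to the contact boundary, and that a symplectomorphism $\phi$ of $\R^4$ carrying $X$ to $X'$ restricts to a contactomorphism between the two boundaries after rescaling the contact form. Concretely, if $\phi(X) = X'$ then $\phi^*\lambda_{\rm std}$ and $\lambda_{\rm std}$ are primitives of the same symplectic form $\omega_{\rm std}$, so they differ by an exact 1-form $dh$; restricting to $Y := \partial X$ and pushing forward, one finds that $\phi|_Y$ pulls $\lambda_{\rm std}|_{Y'}$ back to $\lambda_{\rm std}|_Y + dh|_Y$. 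The key subtlety is that this is not literally a strict contactomorphism; the difference $dh|_Y$ means the Reeb dynamics are not identified on the nose. The standard fix is to use that the Liouville flow of the radial vector field gives a canonical identification of $Y$ with any star-shaped hypersurface in its Liouville cone, and that $\lambda_{\rm std}|_Y + dh|_Y$ is again a contact form defining the same contact structure $\xi = \ker(\lambda_{\rm std}|_Y)$; then invoke the well-known fact that the Ruelle invariant, the minimal period, and the contact volume only depend on $(Y,\xi)$ together with the \emph{cohomology class} of the contact form in a suitable sense — more precisely, the combination appearing in each ratio is invariant under replacing $\lambda$ by any other contact form $\lambda'$ with $\ker\lambda' = \xi$ that is related to $\lambda$ by the Liouville deformation.

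The cleanest route, which I would actually carry out, avoids the primitive-comparison bookkeeping: first reduce to the case where $\phi$ fixes the origin and is the identity to first order is \emph{not} available, so instead use the following three-step reduction. First, recall (as already used implicitly in the paper, e.g. in the passage from $(Y,\lambda)$ to $X_\Omega$) that ${\rm ru}(Y,\lambda)$ and ${\rm sys}(Y,\lambda)$ are invariant under rescaling $\lambda \mapsto c\lambda$ and under strict contactomorphisms; this is immediate from the definitions since ${\rm Ru}$, the minimal period, and ${\rm Vol}$ all scale homogeneously and the normalizations cancel. Second, show that for a star-shaped $X$ the pair $(\partial X, \lambda_{\rm std}|_{\partial X})$ is strictly contactomorphic to $(S^3, f\lambda_{\rm std}|_{S^3})$ for the defining function $f$, and more importantly that if $\phi(X) = X'$ then $\phi$ induces a diffeomorphism $\partial X \to \partial X'$ intertwining the Reeb vector fields up to reparametrization \emph{once one normalizes the contact forms by their Liouville primitives}. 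Third, observe that since $\phi^*\lambda_{\rm std} - \lambda_{\rm std} = dh$ is exact, the contact forms $\lambda_{\rm std}|_{\partial X}$ and $(\phi^*\lambda_{\rm std})|_{\partial X} = (\phi|_{\partial X})^*(\lambda_{\rm std}|_{\partial X'})$ differ by the exact form $dh|_{\partial X}$, and then argue that adding an exact 1-form to a contact form on a closed 3-manifold changes neither the Ruelle invariant, nor the set of periods of closed Reeb orbits, nor the contact volume $\int \lambda \wedge d\lambda$.

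The main obstacle — and the real content of the proof — is precisely this last claim: that $\lambda$ and $\lambda + dh$ have the same Ruelle invariant, the same minimal period, and the same volume, when both are contact forms for the same $\xi$. Volume is easy since $d(\lambda+dh) = d\lambda$ so $(\lambda+dh)\wedge d(\lambda+dh) = \lambda\wedge d\lambda + dh\wedge d\lambda$ and $\int_Y dh \wedge d\lambda = \int_Y d(h\, d\lambda) = 0$ by Stokes. For the periods and the Ruelle invariant, the point is that $\lambda$ and $\lambda + dh$, being two contact forms with the same kernel, have Reeb flows that are \emph{reparametrizations of the same oriented characteristic foliation} up to a flow along $\xi$; more carefully, one uses that they are connected by the path $\lambda_t = \lambda + t\,dh$ of contact forms and tracks how ${\rm Ru}$ and the action spectrum vary — the action spectrum is a contact invariant in the sense that it is determined by the Liouville structure on the symplectization, which is unchanged, and ${\rm Ru}$ likewise can be computed from the linearized flow on $\xi$, which is canonically identified. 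I would expect to spend most of the write-up making this "contact forms with the same kernel and the same primitive" argument precise, possibly by passing to the symplectization $\R \times Y$ where $\phi$ lifts to an exact symplectomorphism and the invariance of all three quantities becomes transparent. The formula ${\rm Ru}(X_\Omega) = a(\Omega) + b(\Omega)$ from Proposition \ref{prop-1} is not needed here but serves as a useful consistency check, since $a(\Omega) + b(\Omega)$ is visibly not a symplectomorphism invariant of $\Omega$ while the \emph{ratio} is.
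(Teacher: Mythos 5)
Your Stokes computation for the invariance of the contact volume is correct, and you have correctly identified the central difficulty: after restricting a symplectomorphism $\phi$ with $\phi(X)=X'$ to the boundary, one only gets $(\phi|_{\partial X})^*(\lambda|_{\partial X'}) = \lambda|_{\partial X} + dh$, not a strict contactomorphism. But the route you propose from there has a genuine gap. The claim that $\lambda|_{\partial X}$ and $\lambda|_{\partial X}+dh$ define the same contact structure $\xi=\ker(\lambda|_{\partial X})$ is simply false in general: adding an exact form typically tilts the kernel hyperplane. And even granting the same kernel, the further claim that ``contact forms with the same kernel have Reeb flows that are reparametrizations of the same oriented characteristic foliation'' is also false --- the Reeb vector field, the set of closed orbits, the action spectrum, and the Ruelle invariant all depend on the contact \emph{form}, not just the contact \emph{structure}; two conformal contact forms $\lambda$ and $f\lambda$ generically have wildly different Reeb dynamics. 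So the direct pointwise comparison of $\lambda$ and $\lambda+dh$ cannot yield the equality of periods or of ${\rm Ru}$.

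What actually closes the gap, and what the paper does via its Lemma 3.3, is to upgrade $\phi|_{\partial X}$ to a \emph{strict} contactomorphism $\psi$ with $\psi^*(\lambda|_{\partial X'}) = \lambda|_{\partial X}$ exactly, using the Moser/Gray argument (the paper cites \cite[Lemma 3.5]{CD-20} for this): one flows along a vector field generated by the path $\lambda_t = \lambda + t\,dh$ to absorb the $dh$ term into an isotopy of $\partial X$. Once one has a strict contactomorphism, the invariance of the contact volume, of the Reeb vector field (hence of the minimal period), and of the linearized Reeb flow (hence of ${\rm Ru}$) is immediate, because $\psi$ is a diffeomorphism conjugating one Reeb flow to the other --- the invariants of $\lambda$ and of $\lambda+dh$ agree not as forms on the same manifold but because they are related by a diffeomorphism. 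You glimpse this with the Moser path and the suggestion to pass to the symplectization, but you never extract the key conclusion (existence of the strict contactomorphism), and the two intermediate facts you lean on instead are incorrect. To repair the write-up, replace the ``same kernel, same foliation'' reasoning with the Moser step and then observe that all three quantities are manifestly invariant under strict contactomorphism; that is exactly the structure of the paper's proof via its Lemma 3.3.
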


Proposition \ref{prop-invariant} follows from a general statement, Lemma \ref{lemma-sc} below. It relates symplectomorphisms between star-shaped domains in $\R^4$ and strict contactomorphisms on their boundaries. Recall that a strict contactomorphism is defined as a contactomorphism that preserves the contact 1-forms.  

\begin{lemma}\label{lemma-sc}
Let $X$ and $X'$ be star-shaped domains of $\mathbb{R}^4$. Then, $X$ and $X'$ are symplectomorphic if and only if their contact boundaries $(\partial X,\lambda|_{\partial X})$ and $(\partial X',\lambda|_{\partial X'})$ are strictly contactomorphic.
\end{lemma}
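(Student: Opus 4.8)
The plan is to prove both directions of the equivalence. The easy direction is ($\Rightarrow$): suppose $\phi \colon X \to X'$ is a symplectomorphism. A priori $\phi$ need not send $\partial X$ to $\partial X'$ in a way that respects the primitives $\lambda|_{\partial X}$ and $\lambda|_{\partial X'}$, so the first task is to upgrade $\phi$ to a symplectomorphism that is ``standard near the boundary.'' Here I would invoke the fact that both $X$ and $X'$ are star-shaped, so each admits a Liouville/collar structure near its boundary induced by the radial Liouville vector field $L = \frac12(\sum x_i \partial_{x_i} + y_i \partial_{y_i})$, whose flow identifies a neighbourhood of $\partial X$ in $X$ with $((1-\epsilon,1] \times \partial X, \, d(t\,\lambda|_{\partial X}))$, and similarly for $X'$. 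A symplectomorphism $\phi$ pulls back the Liouville form $\lambda$ on $X'$ to a Liouville form $\phi^*\lambda$ on $X$ that differs from $\lambda$ by a closed (hence, since we may work with compactly supported or exact primitives on these contractible-type domains, exact) one-form; a Moser-type argument then lets us isotope $\phi$, within symplectomorphisms, to $\psi$ with $\psi^*\lambda = \lambda$ near $\partial X$. The restriction $\psi|_{\partial X} \colon \partial X \to \partial X'$ is then a diffeomorphism with $(\psi|_{\partial X})^*(\lambda|_{\partial X'}) = \lambda|_{\partial X}$, i.e.\ a strict contactomorphism.

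For the converse ($\Leftarrow$), suppose $g \colon (\partial X, \lambda|_{\partial X}) \to (\partial X', \lambda|_{\partial X'})$ is a strict contactomorphism. The idea is to build the symplectomorphism as ``the cone over $g$.'' Using again the radial Liouville flow, parametrize a neighbourhood of $\partial X$ inside $\R^4$ (indeed all of $\R^4 \setminus \{0\}$, since the flow of $L$ is globally defined and $X$ is star-shaped) by $(0,\infty) \times \partial X$ with symplectic form $d(t\,\lambda|_{\partial X})$, and do the same for $X'$. Then the map $\widetilde g(t, y) = (t, g(y))$ is a symplectomorphism of these symplectizations — it carries $t\,\lambda|_{\partial X}$ to $t\,\lambda|_{\partial X'}$ — and it sends $X \setminus \{0\}$, which in these coordinates is $\{t \le 1\} \times \partial X$ (using that $X$ is star-shaped), onto $X' \setminus \{0\}$. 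Finally $\widetilde g$ extends continuously over the origin by $0 \mapsto 0$, and one checks this extension is smooth and symplectic — this is routine because near $0$ the symplectization coordinates degenerate in a controlled, model way (identical for $X$ and $X'$), so $\widetilde g$ is modelled on a genuine linear-type symplectomorphism there. This yields $\phi \colon X \to X'$ a symplectomorphism.

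The main obstacle is the ($\Rightarrow$) direction: ensuring that an arbitrary symplectomorphism can be deformed to one that is strictly Liouville near the boundary, and in particular checking that the relevant closed one-form $\phi^*\lambda - \lambda$ is exact so that the Moser argument applies and the resulting boundary map genuinely preserves the contact forms on the nose (not just up to a conformal factor). A secondary technical point, relevant to both directions, is the smoothness of the constructions at the origin of $\R^4$, where the symplectization coordinates $(t,y)$ break down; this requires a short local argument rather than a formal one. Everything else — the existence of the Liouville collar for star-shaped domains, the global definition of the radial flow, the bijectivity of the cone map on the level of domains — is immediate from the definitions recalled in the introduction of the paper.
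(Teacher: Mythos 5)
Your proposal follows the same two-step structure as the paper's proof. Forward direction: the paper restricts $\phi$ to $\partial X$, observes that $(\phi|_{\partial X})^*\lambda|_{\partial X'} - \lambda|_{\partial X}$ is closed, uses $H^1(S^3;\R)=0$ to write it as $df$, and cites \cite[Lemma 3.5]{CD-20} (Moser) to isotope $\phi|_{\partial X}$ to a strict contactomorphism; you do the same Moser argument slightly differently by straightening $\phi$ in a collar so that $\psi^*\lambda = \lambda$ near $\partial X$ before restricting, but the engine is identical. Backward direction: both you and the paper use exactly the same cone construction, identifying $\R^4 \setminus \{\vec 0\}$ with the symplectization of the boundary and sending $(r,y)\mapsto (r,\psi(y))$.

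One remark on the backward direction, which affects both write-ups but where your justification is actively misleading. You assert that the cone map ``is modelled on a genuine linear-type symplectomorphism'' near the origin, and that smoothness there is therefore routine. That is false in general: writing the cone map in Cartesian coordinates as $z \mapsto |z|\,\psi(z/|z|)$ in the model case $\partial X = \partial X' = S^3$, the one-sided directional derivative at $0$ in direction $v$ equals $\psi(v)$, so the map is differentiable at $0$ \emph{only if} $\psi$ is the restriction of a linear map. Since $(S^3,\lambda_{\rm std})$ admits plenty of nonlinear strict contactomorphisms (flow any Reeb-invariant contact Hamiltonian), the cone is generically not $C^1$ at the origin. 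The paper itself is terse here (``$\Psi$ smoothly extends to the core part''), but it does not make the incorrect claim that the map is linear-type near $0$; you should not present that as the reason for smoothness, since it is precisely what fails. Everything else in your proposal matches the paper's argument.
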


\begin{proof} Assume that there is a symplectomorphism $\phi \in {\rm Symp}(\R^4)$ such that $\phi|_X(X) = X'$, in particular, $\phi|_{\partial X}(\partial X) = \partial X'$. Since $\phi^*\omega_{\rm std}=\omega_{\rm std}$ with $\omega_{\rm std}=d\lambda$, we have $d((\phi|_X)^*\lambda - \lambda) = 0$ everywhere on $X$. In particular, on the boundary $\partial X$, since $H_1(\partial X; \R) = H_1(S^3; \R)=0$, there exists a function $f$ on $\partial X$ such that $(\phi|_{\partial X})^*\lambda|_{\partial X'}=\lambda_{\partial X} +df$. Therefore, by \cite[Lemma 3.5]{CD-20} (which itself is due to Moser's trick), we know that the restriction $\phi|_{\rm \partial X}$ can be isotoped to a strict contactomorphism. 

On the other hand, assume that there is a strict contactomorphism $\psi$ such that $\psi(\partial X) = \partial X'$. Since $X$ is a Liouville domains with its completion being $\R^4$, we have the following structure decomposition 
\begin{equation} \label{symp-complete}
\R^4 \simeq \{\vec{0}\} \sqcup (\R \times \partial X) 
\end{equation}
where $\R \times \partial X$ is identified with the symplectization of $(\partial X, \lambda|_{\partial X})$ with respect to the symplectic structure $d(e^r \lambda|_{\partial X})$. We do the similar identification to $X'$. It is easily verified that a strict contactomorphism $\psi$ extends to a symplectomorphism $\Psi: (\R \times \partial X, d(e^r \lambda|_{\partial X})) \to (\R \times \partial X', d(e^r \lambda|_{\partial X'}))$. Indeed, the extension is explicitly given by 
\[ (r, x) \mapsto (r, \psi(x)). \]
In particular, the hypothesis that the contactomorphism $\psi$ is strict yields that the $\R$-factor $r$ is mapped to $r$. Therefore, $\Psi$ smoothly extends to the core part $\{\vec{0}\}$. Thus we obtained the desired symplectomorphism on $\R^4$, which maps $X$ onto $X'$ since both $X$ and $X'$ can be written as $\{\vec{0}\} \sqcup \{r<1\}$ under the identification (\ref{symp-complete}). \end{proof}

We are now ready to give the proof of Proposition \ref{prop-invariant}. 

\begin{proof} [Proof of Proposition \ref{prop-invariant}] By Lemma \ref{lemma-sc}, we are reduced to the hypothesis that $(\partial X,\lambda|_{\partial X})$ and $(\partial X',\lambda|_{\partial X'})$ are strictly contactomorphic by some strict contactomorphism $\psi$. Observe that ${\rm Vol}(\partial X, \lambda|_{\partial X}) = {\rm Vol}(\partial X', \lambda|_{\partial X'})$ since 
\begin{align*}
{\rm Vol}(\partial X', \lambda|_{\partial X'}) & = \int_{\partial X'} \lambda|_{\partial X'} \wedge (d \lambda|_{\partial X'}) \\
& = \int_{\psi(\partial X)}  \lambda|_{\partial X'} \wedge (d \lambda|_{\partial X'}) \\
& = \int_{\partial X}  \lambda|_{\partial X} \wedge (d \lambda|_{\partial X}) = {\rm Vol}(\partial X, \lambda|_{\partial X}). 
\end{align*}
Next, observe that $\phi$ maps the Reeb vector field $R$ on $\partial X$ to the Reeb vector field $R'$ on $\partial X'$ since 
\[ \lambda|_{\partial X'}(\phi_*(R)) = \lambda|_{\partial X}(R) = 1 \,\,\,\,\mbox{and}\,\,\,\, \iota_{\phi_*(R)} (d\lambda|_{\partial X'}) = \iota_{R} (d\lambda|_{\partial X}) = 0. \] 
This implies that the minimal period of the flow of $R$ is equal to the one of $R'$. Together with the above observation on the volumes, we obtain the second conclusion. 

Finally, since $\phi_*$ also preserves the linearized Reeb flow, by Definition \ref{dfn-ruelle}, we have ${\rm Ru}(\partial X, \lambda|_{\partial X}) = {\rm Ru}(\partial X', \lambda|_{\partial X'})$. Note that the trivialization is not involved in the discussion due to our assumption on the topology of $\partial X'$, even though the pushforward $\phi_*$ will potentially change the trivialization in the definition of the (limit of the) rotation number (\ref{dfn-rot}). Again, together with the conclusion on volume above, we obtain the first conclusion. \end{proof}

\begin{rmk} It is worth mentioning that for star-shaped domains, even though ${\rm Ru}(X)$, under our definition in Definition \ref{dfn-ruelle}, is invariant under the symplectomorphisms, it can not provide symplectic embedding obstructions in general. For instance, by Proposition \ref{prop-1}, the ellipsoid $E(1,4)$ symplectically embeds into the ball $B^4(2)$, while ${\rm Ru}(E(1,4)) = 5$ which is bigger than ${\rm Ru}(B^4(2)) =4$. In fact, Ruelle invariant behaves fundamentally different from symplectic capacities (cf.~\cite[Corollary 1.13]{Hutchings-Ruelle}). \end{rmk}

\section{Proof of Theorem \ref{thm-main-1}} \label{sec-proof-main}

In this section, we will give the proof of the main result, Theorem \ref{thm-main-1} in the introduction. Let us start from the following elementary observation on the closed Reeb orbit on a star-shaped domain $(\partial X, \lambda)$. By (\ref{reeb-p}), for a point $p = (w_1, w_2) \in \partial_+\Omega= \partial \Omega \cap \R_{>0}^2$, we know that
\begin{equation} \label{period-cond}
\mbox{a Reeb trajectory at $\mu^{-1}(p)$ is closed} \,\,\,\,\,\,\,\mbox{if and only if} \,\,\,\,\,\,\, \left|\frac{\nu_2(p)}{\nu_1(p)} \right|  \in \Q. 
\end{equation}
Assume that $\left|\frac{\nu_2(p)}{\nu_1(p)} \right|  \in \Q$. Denote by $h_p \in \R_{>0}$ the unique non-zero positive scalar such that 
\begin{itemize}
\item[(i)] $(h_p \nu_1(p), h_p \nu_2(p)) \in \Z^2$;
\item[(ii)] $h_p \nu_1(p), h_p \nu_2(p)$ are coprime. 
\end{itemize}
For brevity, denote $(m_p, n_p) := (h_p \nu_1(p), h_p \nu_2(p))$. The second condition (ii) above guarantees that the corresponding closed Reeb orbit, denoted by $\gamma_{(m_p, n_p)}$ is primitive (that is, not a multiple cover of another closed Reeb orbit). Note that the period of $\gamma_{(m_p, n_p)}$ is, by definition, the action $\mathcal A(\gamma_{(m_p, n_p)})$. Hence, 
\begin{equation} \label{action-1}
\mathcal A(\gamma_{(m_p, n_p)}) = (\nu_1(p) w_1 + \nu_2(p) w_2) \cdot h_p = m_p w_1 + n_p w_2.
\end{equation} 
This can be viewed as the inner product of the (integer-normalized) normal vector $(m_p, n_p)$ and the position vector $(w_1, w_2)$ (for point $p$). Due to our hypothesis that $X_{\Omega}$ is star-shaped, the action $\mathcal A(\gamma_{(m_p, n_p)})$ is always positive, even though the vector $(m_p, n_p)$ does not have both of its components positive.

\subsection{Method one: strangulation} \label{ssec-strangulation} Since $X_{\Omega}$ is a star-shaped domain, we can assume, for its moment image $\Omega$, without loss of generality, that the diagonal of $\R_{> 0}^2$ intersects $\partial \Omega$ at point $(w_*, w_*)$ such that a neighborhood within $\R_{>0}^2$ of the subset $\{(w, w)  \in \R_{>0}^2\, | \, 0\leq w < w_*\}$ lies in the interior of $\Omega$. In general, there always exists some ray in $\R_{\geq 0}^2$ satisfying this condition. By our assumption, since $\overline{\partial_+\Omega}$ is smooth, for any given $\ep>0$, there exists some angle $\theta(\ep)$ such that the unbounded sector with vertex $(\ep, \ep)$, divided in half by the diagonal, and angle equal to $2 \theta(\ep)$, intersects $\Omega$ in a closed region $\mathcal S(\ep)$ with points $(w_1, w_2) \in \partial \Omega \cap \mathcal S$ satisfying 
\[ |w_1 - w_*| \leq \ep \,\,\,\,\mbox{and}\,\,\,\, |w_2 - w_*| \leq \ep.\]
Now, carry on the following {\it strangulation} operation on $\Omega$, that is, define 
\[ \widehat{\Omega} : = \Omega \backslash ({\rm int}(\mathcal S(\ep)) \cup {\rm int}(\partial \Omega \cap \mathcal S(\ep))). \]
For a schematic picture of this operation, see Figure \ref{figure-1}. 
\begin{figure}[h]
\includegraphics[scale=0.8]{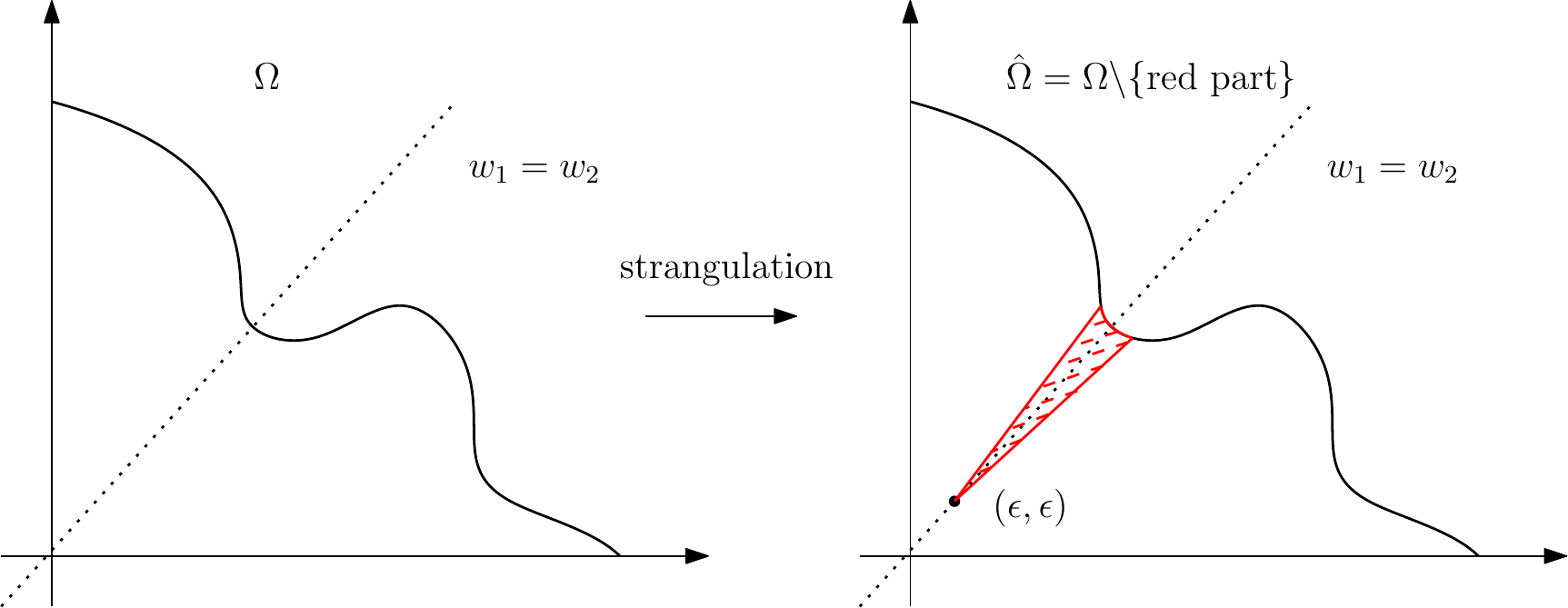}
\centering
\caption{Strangulation operation.} \label{figure-1}
\end{figure}

After a smoothening of all the possible corners in $\widehat{\Omega}$, we have that the resulting domain, still denoted by $\widehat{\Omega}$, is again a closed domain in $\R_{\geq 0}^2$ with its pre-image under the moment map $\mu^{-1}(\widehat{\Omega}) =: X_{\widehat{\Omega}}$ being a star-shaped domain. On the other hand, when $\ep<w_*$, the 4-dimensional volume of $X_{\Omega}$ and $X_{\widehat{\Omega}}$ (with respect to the standard symplectic structure on $\R^4$) satisfy 
\begin{equation} \label{vol}
|{\rm Vol}(X_{\widehat{\Omega}}) - {\rm Vol}(X_{\Omega})| \leq \pi \cdot 2(w_*+\ep)^2 \cdot \frac{\theta(\ep)}{\pi} = 8w_*^2 \cdot \theta(\ep)
\end{equation}
which goes to $0$ as $\ep$ goes to $0$ (since $\theta(\ep)$ goes to $0$). Therefore, $X_{\widehat{\Omega}}$ is indeed a $C^0$-small perturbation in terms of the volume of $X_{\Omega}$. 

By the discussion above on the closed Reeb orbits in (\ref{period-cond}), applied to the new domain $\widehat{\Omega}$, there exists a closed Reeb orbit at $p = (\ep, \ep)$ corresponding to the normal vector $(1,1)$. In particular, by (\ref{action-1}) its action is $1 \cdot \ep + 1 \cdot \ep = 2 \ep$. Denoting by $\widehat{T}_{\rm min}$ the minimal period of a closed Reeb orbit of $\partial X_{\widehat{\Omega}}$, we have 
\[ \widehat{T}_{\rm min} \leq 2 \ep \]
Therefore, 
\[ {\rm sys}\left(X_{\widehat{\Omega}}\right) \leq \frac{4 \ep^2}{{\rm Vol}(\partial X_{\widehat{\Omega}}, \lambda)} = \frac{4 \ep^2}{2{\rm Vol}(X_{\widehat{\Omega}})} \leq \frac{4 \ep^2}{2{\rm Vol}(X_{\Omega}) - 16w_*^2 \cdot \theta(\ep)} \]
where the equality comes from the following computation via Stokes' theorem, 
\begin{align*}
{\rm Vol}(\partial X_{\widehat{\Omega}}, \lambda)  = \int_{\partial X_{\widehat{\Omega}}} \lambda \wedge d\lambda &  = \int_{ X_{\widehat{\Omega}}} d(\lambda \wedge d\lambda)\\
& = \int_{X_{\widehat{\Omega}}} d\lambda \wedge d \lambda = 2{\rm Vol}(X_{\widehat{\Omega}}), 
\end{align*}
and the second inequality comes from (\ref{vol}). Hence, when $\ep$ is sufficiently small so that $w_*^2 \cdot \theta(\ep) < \frac{{\rm Vol}(X_{\Omega})}{16}$, we have 
\begin{equation} \label{sys-ep}
{\rm sys}\left(X_{\widehat{\Omega}}\right) \leq \frac{4 \ep^2}{{\rm Vol}(X_{\Omega})} \to 0 \,\,\,\,\mbox{as}\,\,\,\, \ep \to 0. 
\end{equation}
Finally, since the strangulation operation does not change the $w_1$ or $w_2$-intercepts of the original domain $\Omega$, due to Proposition \ref{prop-1}, the Ruelle invariant does not change, that is, ${\rm Ru}(X_{\widehat{\Omega}}) = {\rm Ru}(X_{\Omega})$. Therefore, similarly to the argument above, when $\ep$ is sufficiently small, we have 
\[ {\rm ru}\left(X_{\widehat{\Omega}}\right)^2 = \frac{{\rm Ru}(X_{\widehat{\Omega}})^2}{2{\rm Vol}(X_{\widehat{\Omega}})} \leq \frac{{\rm Ru}(X_{\Omega})^2}{2{\rm Vol}(X_{\Omega})-16w_*^2 \cdot \theta(\ep)} \leq2 \cdot {\rm ru}(X_{\Omega})^2, \]
where we used as before that $\ep$ is small enough so that $16w_*^2 \cdot \theta(\ep)< {\rm Vol}(X_{\Omega})$. Note that the right hand side is in particular finite; thus we have 
\[ {\rm ru}\left(X_{\widehat{\Omega}}\right) \cdot  {\rm sys}\left(X_{\widehat{\Omega}}\right)^{\frac{1}{2}}  \leq \frac{2 \ep}{\sqrt{{\rm Vol}(X_{\Omega})}}  \cdot \sqrt{2} \cdot {\rm ru}(X_{\Omega}). \]
Therefore, the product ${\rm ru}\left(X_{\widehat{\Omega}}\right) \cdot {\rm sys}\left(X_{\widehat{\Omega}}\right)^{\frac{1}{2}}$ will be lower than the constant $c$ appearing in criterion (\ref{CD-criterion}) whenever $\ep$ is sufficiently small by (\ref{sys-ep}). In conclusion, the domain $X_{\widehat{\Omega}}$ is not symplectically convex. 

\begin{rmk} It is not necessary to carry out the strangulation operation along the diagonal, as we did above. In general, most rays starting from the origin work in a similar way. An extreme case is to carry our such an operation along the $w_1$-axis or $w_2$-axis. The only difference is that the Ruelle invariant will change but gets smaller (so we still obtain the result that the product of ratios ${\rm ru} \cdot {\rm sys}$ will be eventually smaller than the constant $c$ in the criterion in (\ref{CD-criterion}). In fact, such an operation has been investigated in \cite{Usher-BM} on ellipsoids, called truncated ellipsoid. 

Theorem 1.4 in \cite{Usher-BM} shows that for any large number $A>0$, there always exists a truncated ellipsoid with its {\rm symplectic Banach-Mazur distance} $d_{\rm SBM}$ to the set of all ellipsoids larger than $A$. The distance $d_{\rm SBM}$ is a refinement of the symplectic version of the classical Banach-Mazur distance (see also \cite{SZ21}). It is a quantitative way to measure the symplectic embedding properties, but with some extra unknottedness condition (cf.~\cite{GU-knotted}). It is this unknottedness condition that prevents one from concluding that truncated ellipsoids are not symplectically convex, even though John's ellipsoid theorem readily implies that any symplectically convex domain should not be far from being squeezed by ellipsoids. 
\end{rmk}

\subsection{Method two: strain} \label{ssec-strain} Given a star-shaped domain $X_{\Omega}$, suppose that the $w_1$-intercept of $\overline{\partial_+\Omega}$ is $a >0$. Consider a generic $C^0$-small perturbation of $\Omega$ near $(a,0)$ but with the $w_1$-intercept $a$ fixed, which also results in a $C^0$-small perturbation of $X_{\Omega}$ in terms of the volume, such that in a neighborhood $N$ of $(a,0)$, the boundary $\overline{\partial_+\Omega}$ has a constant slope $k$, either positive or negative (but not equal to $\pm \infty$). This can be achieved due to our hypothesis that $\overline{\partial_+\Omega}$ is smooth, and we can consider $N$ sufficiently small so that the minimal period of the Reeb orbit of $\partial X_\Omega$ changes in an arbitrarily small way. For brevity, we still denote the domain after this perturbation by $\Omega$. 

Next, for any $\ep>0$, sufficiently small so that the (unique) point $(w_*(\ep), \ep) \in {\partial_+\Omega}$ for some $w_*>0$ lies in the neighborhood $N$ above, we have $\frac{\ep - 0}{w_*(\ep) - a} = k$, that is, 
\begin{equation} \label{w}
w_*(\ep) = \frac{\ep}{k} + a.
\end{equation}
Consider the following triangle 
\[ \mathcal T(\ep) := \mbox{the triangle determined by vertices $(0, 0)$, $(w_*(\ep), \ep)$, and $\left(\frac{1}{\sqrt{\ep}}, 0\right)$} \]
where $\ep$ is sufficiently small so that 
\begin{equation} \label{k}
\frac{-\ep}{\frac{1}{\sqrt{\ep}} - w_*(\ep)} > k \,\,\,\,\mbox{if $k<0$}
\end{equation}
This can be achieved since (\ref{k}) is equal to $k(a-\frac{1}{\sqrt{\epsilon}})>0$, so when $\ep \to 0$, we have $a - \frac{1}{\sqrt{\ep}}<0$ (since $k<0$). Then consider the following {\it strain} operation on $\Omega$, that is, 
\[ \widehat{\Omega} : = \Omega \cup \mathcal T(\ep). \]
For a schematic picture of this operation, see Figure \ref{figure-2}. 

\begin{figure}[h]
\includegraphics[scale=0.8]{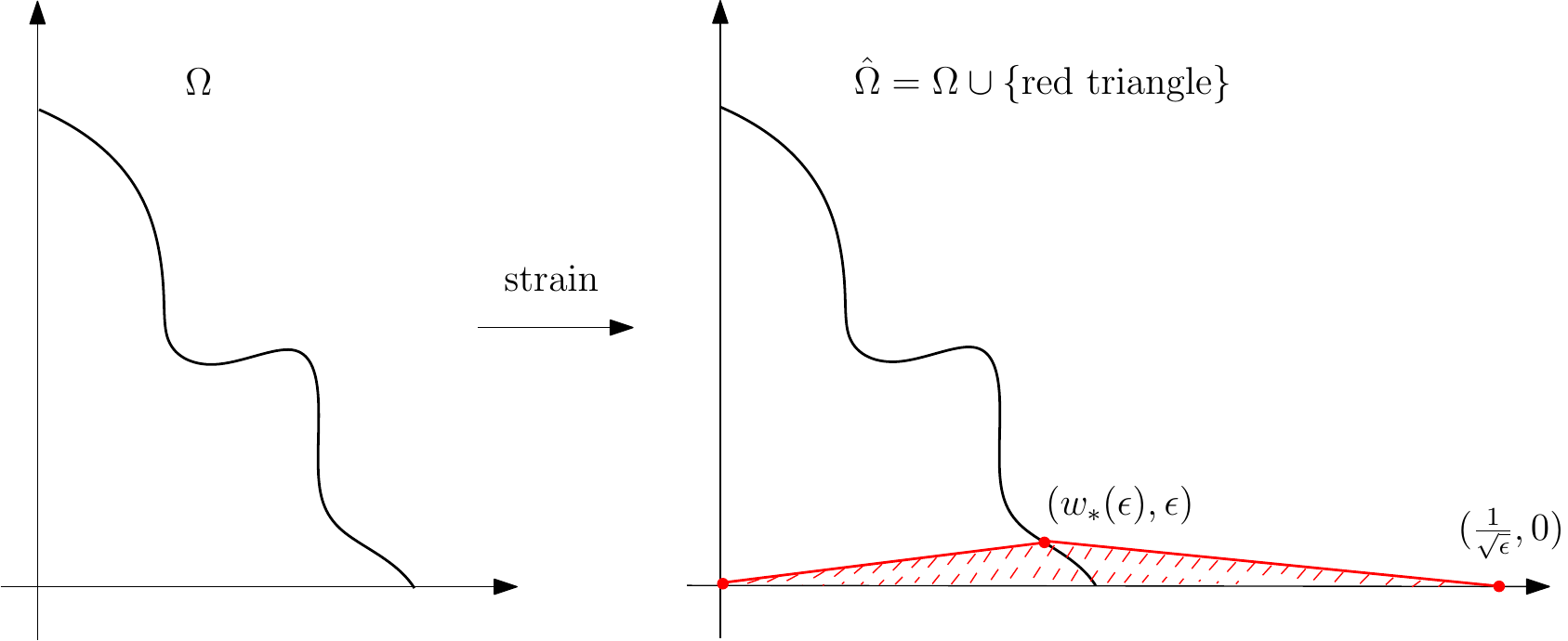}
\centering
\caption{Strain operation.} \label{figure-2}
\end{figure}

Observe that condition (\ref{k}) together with the hypothesis that $X_{\Omega}$ is star-shaped, implies that $\Omega \subset \widehat{\Omega}$ and the pre-image $X_{\widehat{\Omega}} = \mu^{-1}(\widehat{\Omega})$ is again star-shaped. In particular, $X_{\Omega}$ being star-shaped is used to deal with the case when $k >0$. 

Comparing the difference of the volume in $\R^4$, we have 
\begin{equation} \label{vol-2}
|{\rm Vol}(X_{\widehat{\Omega}}) - {\rm Vol}(X_{\Omega})| \leq \frac{\ep \cdot \frac{1}{\sqrt{\ep}}}{2} = \frac{\sqrt{\ep}}{2} 
\end{equation}
which goes to $0$ as $\ep$ goes to $0$. Therefore, $X_{\widehat{\Omega}}$ is indeed a $C^0$-small perturbation in terms of the volume of $X_{\Omega}$. 

This operation possibly introduces various new closed Reeb orbits. Besides the one corresponding to the $w_1$-intercept point $(\frac{1}{\sqrt{\ep}}, 0)$ with large action, others will concentrate only near the point $p = (w_*(\ep), \ep)$, after a smoothening at $p$. By (\ref{period-cond}), these closed Reeb orbits correspond to the pairs of integers, 
\[ (m_p, n_p) \in \Z_{>0} \times \Z \,\,\,\,\mbox{with} \,\,\,\, \min\left\{- \frac{1}{k}, 0\right\} \leq \frac{n_p}{m_p} \leq \frac{\sqrt{\ep} - w_*(\ep)}{\ep}. \]
Concerning their action, we have by (\ref{action-1}),  
\begin{align*}
\mathcal A(\gamma_{(m_p, n_p)}) & = m_p w_*(\ep) + n_p \ep \\
& = m_p\left(w_*(\ep) + \frac{n_p}{m_p} \ep \right)\\
& \geq m_p \left( \frac{\ep}{k} + a + \min\left\{- \frac{\ep}{k}, 0\right\}\right) \geq \frac{a}{2},
\end{align*}
when $\ep$ is sufficiently small. We denote  as above, $T_{\rm min}$ the minimal period of a closed Reeb orbit on $\partial X_{\Omega}$ and $\widehat{T}_{\rm min}$ the minimal period of a closed Reeb orbit on $\partial X_{\widehat{\Omega}}$. If $T_{\rm min} < \frac{a}{2}$, then obviously $\widehat{T}_{\rm min} = T_{\rm min}$. If, on the other hand, $T_{\rm min} \geq \frac{a}{2}$, then $\widehat{T}_{\rm min} \geq \frac{a}{2}$. Meanwhile, by assumption, $T_{\rm min} \leq a$. Therefore, in either case, we have
\[ \widehat{T}_{\rm min} \geq \frac{T_{\rm min}}{2}. \]
In particular, the lower bound $\frac{T_{\rm min}}{2}$ is independent of the parameter $\ep$. 

Now, for the ratios in discussion, by (\ref{vol-2}), 
\[ {\rm sys}\left(X_{\widehat{\Omega}}\right) \geq \frac{T_{\rm min}^2}{4{\rm Vol}(\partial X_{\widehat{\Omega}}, \lambda)} = \frac{T_{\rm min}^2}{8{\rm Vol}(X_{\widehat{\Omega}})} \geq \frac{T_{\rm min}^2}{8{\rm Vol}(X_{\Omega}) + 4\sqrt{\ep}} \]
When $\ep$ is sufficiently small, say $\sqrt{\ep}< {\rm Vol}(X_{\Omega})$, we have 
\begin{equation} \label{sys-lb}
{\rm sys}\left(X_{\widehat{\Omega}}\right) \geq \frac{1}{12} \cdot \frac{T_{\rm min}^2}{{\rm Vol}(X_{\Omega})}  \,\,\,\,\mbox{which is independent of $\ep$}. 
\end{equation}
Finally, the strain operation results in an essential change of the Ruelle invariant. By Proposition \ref{prop-1}, 
\[ {\rm Ru}(X_{\widehat{\Omega}}) = \mbox{($w_2$-intercept of $\overline{\partial_+\Omega}$)} + \frac{1}{\sqrt{\ep}} \geq \frac{1}{\sqrt{\ep}}. \]
Similarly as above, 
\[ {\rm ru}\left(X_{\widehat{\Omega}}\right)^2 = \frac{{\rm Ru}(X_{\widehat{\Omega}})^2}{2{\rm Vol}(X_{\widehat{\Omega}})}   \geq \frac{1}{3\ep} \cdot \frac{1}{{\rm Vol}(X_{\Omega})}.\]
Then we have 
\[ {\rm ru}\left(X_{\widehat{\Omega}}\right) \cdot {\rm sys}\left(X_{\widehat{\Omega}}\right)^{\frac{1}{2}} \geq \frac{1}{6 \ep} \cdot \frac{T_{\rm min}}{{\rm Vol}(X_{\Omega})} \to +\infty \,\,\,\,\mbox{as $\ep \to 0$}.\]
Hence, the product of the ratios will be larger than the constant $C$ appearing in criterion (\ref{CD-criterion}). Therefore, the domain $X_{\widehat{\Omega}}$ is not symplectically convex. 

\begin{rmk} \label{rmk-smt} Note that the strain operation is closed within the category of strictly monotone toric domains (hence, within the category of dynamically convex domains by \cite[Proposition 1.8]{HGR}), since by the definition of a monotone toric domain, near the $w_1$-intercept the corresponding slope $k$ is always negative.  Then the strain method above implies that the product of ratios ${\rm ru} \cdot {\rm sys}^{\frac{1}{2}}$ is unbounded for monotone toric domains.\end{rmk}

\section{Estimate of constants} \label{sec-estimation}

Recall that a strictly monotone toric domain is a star-shaped domain such that the outward normal vectors along the boundary component $\partial_+\Omega=\partial \Omega \cap \R_{>0}^2$ all have both components positive. In contrast with Remark \ref{rmk-smt}, in this section, we give a proof of Theorem \ref{main-theorem-2}, which provides a uniform bounds of the product of ratios ${\rm ru} \cdot {\rm sys}^{\frac{1}{2}}$ for such domains (when they are geometrically convex in $\R^4$). 

\medskip 

Let us start from the following useful result. Denote by $c_{\rm Gr}(X_{\Omega})$ the Gromov width of a toric domain $X_{\Omega}$, measuring the largest $B^4(a)$ that can be symplectically embedded into $X_{\Omega}$. 

\begin{lemma} \label{lem-vol-gr}
Let $X_\Omega$ be a monotone toric domain where the $w_1$-intercept and $w_2$-intercept of $\overline{\partial_+\Omega}$ are $(a,0)$ and $(0,b)$ respectively. Suppose that $b\geq a$, then
\[ {\rm Vol}(X_\Omega)\leq b\cdot c_{\rm Gr}(X_\Omega).\]
\end{lemma}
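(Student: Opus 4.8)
The plan is to exploit the fact that a monotone toric domain sits between two "L-shaped" regions determined by its intercepts, and to bound the Gromov width from below by the size of the largest ball (equivalently, the largest standard simplex) that fits inside $\Omega$. First I would recall that for a monotone toric domain $X_\Omega$, the boundary component $\partial_+\Omega$ has outward normals with non-negative components, so $\Omega$ is "staircase convex": in particular $\Omega$ contains the triangle $\Delta(a) := \{(w_1,w_2)\in\R_{\geq 0}^2 : w_1 + w_2 \leq a\}$, because the segment from $(a,0)$ to $(0,a)$ cannot exit $\Omega$ — if it did, monotonicity of the normals along $\partial_+\Omega$ would be violated (the profile curve, run from the $w_1$-intercept to the $w_2$-intercept, stays weakly above this diagonal chord). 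Hence $X_{\Delta(a)} = B^4(a) \subseteq X_\Omega$, which gives $c_{\rm Gr}(X_\Omega) \geq a$.

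Next I would bound the volume from above. Since $b \geq a$ and $\Omega$ is monotone, $\Omega$ is contained in the rectangle-minus-corner region, but more simply $\Omega \subseteq [0,a]\times[0,b] \cup (\text{something})$ — actually the cleanest bound is that $\Omega$ is contained in the region $\{(w_1,w_2) : 0 \leq w_1, \ 0 \leq w_2, \text{ and } (w_1,w_2) \text{ below the profile}\}$, whose projection to the $w_1$-axis lies in $[0,a]$ and whose $w_2$-extent is at most $b$; by monotonicity the profile curve is non-increasing in neither coordinate in the relevant sense, so $\Omega \subseteq [0,a]\times[0,b]$ is not quite right, but $\mathrm{Area}(\Omega) \leq$ (the area under a staircase from $(0,b)$ to $(a,0)$) $\leq a\cdot b$. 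Then using ${\rm Vol}(X_\Omega) = 2\pi^2 \cdot \text{(something)}$ — more precisely, with the normalization here, ${\rm Vol}(X_\Omega) = \mathrm{Area}(\Omega)$ after the standard identification (the factors of $\pi$ are absorbed in the moment map convention), so ${\rm Vol}(X_\Omega) \leq a b \leq b \cdot c_{\rm Gr}(X_\Omega)$.

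The one genuinely delicate point is matching the normalization conventions: I must check that with the moment map $\mu(z) = (\pi|z_1|^2, \pi|z_2|^2)$ used in the paper, the 4-dimensional symplectic volume of $X_\Omega$ equals $\mathrm{Area}(\Omega)$ (this is standard: $\int_{X_\Omega}\tfrac{\omega^2}{2} = \int_\Omega dw_1\,dw_2$), and that the ball $B^4(a)$ — with capacity/area $a$ — corresponds exactly to the simplex $\Delta(a)$. Granting that, the argument is: $c_{\rm Gr}(X_\Omega) \geq a$ from the embedded simplex, $\mathrm{Vol}(X_\Omega) = \mathrm{Area}(\Omega) \leq ab$ from monotonicity, and combining gives $\mathrm{Vol}(X_\Omega) \leq ab \leq c_{\rm Gr}(X_\Omega)\cdot b$. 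The main obstacle is really the second inequality $\mathrm{Area}(\Omega)\leq ab$: I would prove it by writing the area as $\int_0^a (\text{height of }\Omega\text{ over }w_1)\,dw_1$ and observing that monotonicity forces this height to be non-increasing, hence bounded above by its value near $w_1 = 0$, which is $b$.
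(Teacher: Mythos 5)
There is a genuine gap in the first step. You claim that for a monotone toric domain the profile curve ``stays weakly above'' the chord from $(a,0)$ to $(0,a)$, so that the simplex $\Delta(a)$ sits inside $\Omega$ and hence $c_{\rm Gr}(X_\Omega)\geq a$. But monotonicity only says the outward normal $(\nu_1,\nu_2)$ along $\partial_+\Omega$ has $\nu_1,\nu_2\geq 0$, i.e.\ that the profile curve is the graph $w_2=f(w_1)$ of a non-increasing function $f$; it says nothing about concavity. If $f$ is convex (the ``concave toric domain'' case, which is allowed under monotonicity), the profile lies \emph{below} the chord, the simplex $\Delta(a)$ is not contained in $\Omega$, and $c_{\rm Gr}(X_\Omega)<a$. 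A concrete counterexample: take $f(w_1)=a(1-w_1/a)^2$ on $[0,a]$, so $a=b$. One computes ${\rm Area}(\Omega)=a^2/3$ and $c_{\rm Gr}(X_\Omega)=3a/4$. Your chain ${\rm Vol}(X_\Omega)\leq ab\leq b\cdot c_{\rm Gr}(X_\Omega)$ would then ask for $a^2\leq 3a^2/4$, which is false; the lemma's conclusion $a^2/3\leq 3a^2/4$ is true but is not implied by your two inequalities. Your argument is really a proof for \emph{convex} toric domains, where $f$ is concave and the simplex inclusion holds, but that is a strictly smaller class than monotone.

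The second half of your argument is fine: monotonicity does give $f\leq b$, hence ${\rm Area}(\Omega)\leq ab$, and with the moment map $\mu(z)=(\pi|z_1|^2,\pi|z_2|^2)$ one indeed has ${\rm Vol}(X_\Omega)=\int_{X_\Omega}\omega^2/2={\rm Area}(\Omega)$. The issue is that the crude bound $ab$ is too large, and you cannot compensate by bounding $c_{\rm Gr}$ from below by $a$. The paper instead uses the characterization of $c_{\rm Gr}(X_\Omega)$ from [HGR] as the largest $L$ with the line $w_1+w_2=L$ still tangent to $\partial_+\Omega$ from inside; picking a tangency point $(s,t)$ with $s+t=c_{\rm Gr}$, monotonicity gives the sharper containment $\Omega\subset([0,s]\times[0,b])\cup([0,a]\times[0,t])$, so ${\rm Vol}(X_\Omega)\leq sb+at$, and the algebraic rearrangement $sb+at=s(b-a)+a\,c_{\rm Gr}\leq b\,c_{\rm Gr}$ (using $s\leq c_{\rm Gr}$ and $b\geq a$) closes the argument. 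The key idea you are missing is to locate the tangency point of the supporting line of slope $-1$ and split the volume there, rather than bounding the area by the full rectangle and hoping $c_{\rm Gr}\geq a$.
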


\begin{proof} By the proof of \cite[Theorem 1.11]{HGR}, we know that $c_{\rm Gr}(X_{\Omega})$ is equal to the largest $L>0$ such that the line $w_2 = - w_1 +L$ touches the boundary $\overline{\partial_+\Omega}$ for the first time. Denote by $(s, t)$ one of these intersection points. Then observe that $X_{\Omega}$ being monotone implies that $X_{\Omega} \subset P(s, b) \cup P(a, t).$ Therefore, we have
\begin{align*}
{\rm Vol}(X_{\Omega})  \leq sb + at & =  sb + a(-s + c_{\rm Gr}(X_{\Omega}))\\
& = s(b-a) + a c_{\rm Gr}(X_{\Omega}) \leq b \cdot c_{\rm Gr}(X_{\Omega}) 
\end{align*}
where the last inequality comes from $s \leq c_{\rm Gr}(X_{\Omega})$. \end{proof}

Now, we are ready to give the proof of Theorem \ref{main-theorem-2}. 

\begin{proof} [Proof of Theorem \ref{main-theorem-2}] For a monotone toric domain $X_{\Omega}$, \cite[Theorem 1.7]{HGR} shows that all normalized symplectic capacities coincide. In particular, the minimal period of a Reeb orbit is equal to $c_{\rm Gr}(X_{\Omega})$. Without loss of generality, assume $b \geq a$. Then by Proposition \ref{prop-1} and Lemma \ref{lem-vol-gr}, we have 
\begin{align*}
   \text{ru}(X_\Omega)^2 \cdot \text{sys}(X_\Omega)&  =\frac{(a+b)^2}{4} \cdot \frac{c_{\rm Gr}(X_\Omega)^2}{\text{Vol}(X_\Omega)^2}\\
    & \geq \frac{(a+b)^2}{4}\frac{c_{\rm Gr}(X_\Omega)^2}{(b\cdot c_{\rm Gr}(X_\Omega))^2}\\
    & \geq \frac{(a+b)^2}{4b^2}=\frac{1}{4}\left(1+\frac{a}{b}\right)^2  \geq \frac{1}{4}.
\end{align*}
Thus, we complete the proof of the first conclusion. 

\medskip

Now, suppose furthermore that $X_\Omega$ is geometrically convex in $\R^4$. Up to a rescaling, assume the $w_1$-intercept of $\Omega$ is $1$ while the $w_2$-intercept of $\Omega$ is still $b$. Up to a reflection between $w_1$ and $w_2$, we can assume that $b \geq 1$.
Therefore, we have $\text{Ru}(X_\Omega)=1+b$ for any such domain and so
\[ \text{ru}(X_\Omega)^2 \cdot \text{sys}(X_\Omega)=\frac{(1+b)^2c_{\rm Gr}(X_\Omega)^2}{4\text{Vol}(X_\Omega)^2}\]
depends only on $\frac{c_{\rm Gr}(X_\Omega)}{\text{Vol}(X_\Omega)}$. We thus aim to bound above this quantity among monotone toric domains which are geometrically convex.

By \cite[Proposition 2.3]{HGR}, the following subset 
\[ \widetilde{\Omega} : = \{(\mu_1, \mu_2) \in \R^2 \,| \, (\pi |\mu_1|^2, \pi|\mu_2|^2) \in \Omega \}\]
is a convex subset in $\R^2$. In particular, when restricted to $\R_{\geq 0}^2$, the boundary $\partial \widetilde{\Omega}$ can be written as a decreasing concave function $\mu_2 = g(\mu_1)$. Since $g$ is concave, we have $g(\mu_1) \geq \sqrt{b}(1-\mu_1)$ for all $\mu_1 \in [0,1]$. Meanwhile, if we fix $c=c_{\rm Gr}(X_\Omega)$, we also have $g(\mu_1)\geq \sqrt{c-\mu_1^2}$ for all $\mu_1 \in [0,1]$ (since $B^4(c)\subset X_\Omega$). Therefore, $g$ is above the broken curve consisting of the two previous curves. Hence, among these $g$, the one whose domain maximizes $\frac{c_{\rm Gr}}{\text{Vol}}$ is the one minimizing the volume i.e. the convex hull of this broken curve, see Figure \ref{figure-3}. It has the following boundary:
$$g_c(\mu_1)=\begin{cases}
\sqrt{b}-\sqrt{\frac{b-c}{c}}\mu_1 &\text{ if }\,\,\,\,0\leq \mu_1\leq \sqrt{\frac{c}{b}(b-c)}\\
\sqrt{c-\mu_1^2} &\text{ if } \,\,\,\,\sqrt{\frac{c}{b}(b-c)}\leq \mu_1\leq c\\
\sqrt{\frac{c}{1-c}}(1-\mu_1) &\text{ if }\,\,\,\, c\leq \mu_1\leq 1
\end{cases}.$$
\begin{figure}[h]
\includegraphics[scale=0.8]{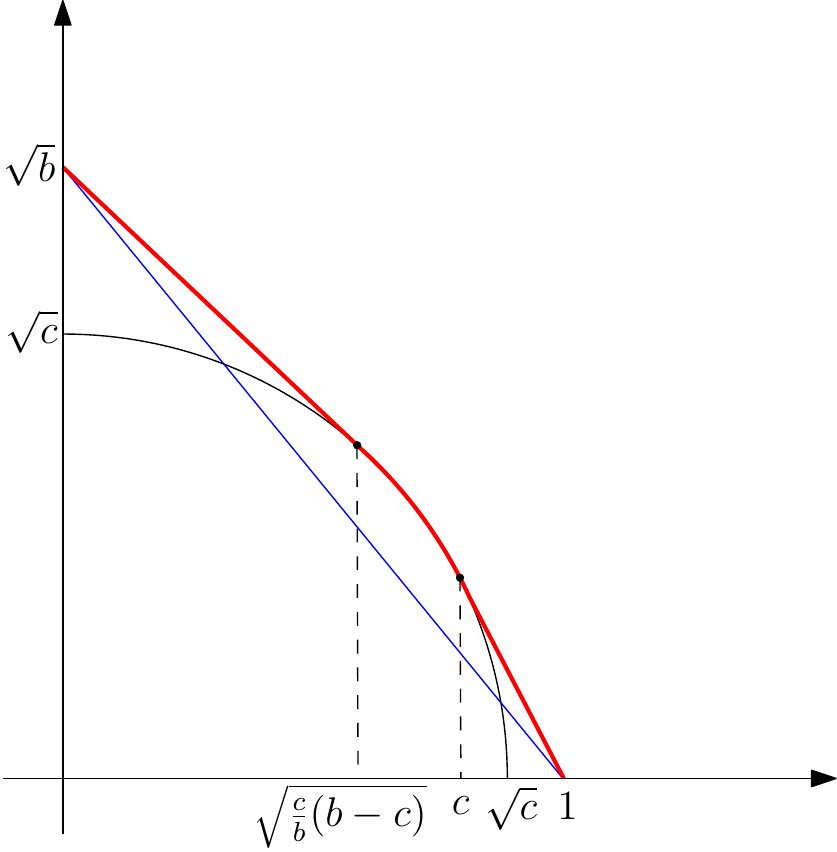}
\centering
\caption{Graph of $g_c$ in red.} \label{figure-3}
\end{figure}
 By a change of variables $w_i=\mu_i^2$, we know that the boundary $\overline{\partial_+\Omega}$ (minus the components on $w_1$-axis and $w_2$-axis) is a function $w_2 = f_c(w_1): = g_c(\sqrt{w_1})^2$ given by

$$f_c(w_1)=\begin{cases}
\left(\sqrt{b}-\sqrt{\frac{b-c}{c}w_1}\right)^2 &\text{ if }\,\,\,\,0\leq w_1\leq \frac{c}{b}(b-c)\\
c-w_1 &\text{ if }\,\,\,\, \frac{c}{b}(b-c)\leq w_1\leq c^2\\
\frac{c}{1-c}(1-\sqrt{w_1})^2 &\text{ if } \,\,\,\,c^2\leq w_1\leq 1
\end{cases},$$
also see Figure \ref{figure-4}. 
\begin{figure}[h]
\includegraphics[scale=0.8]{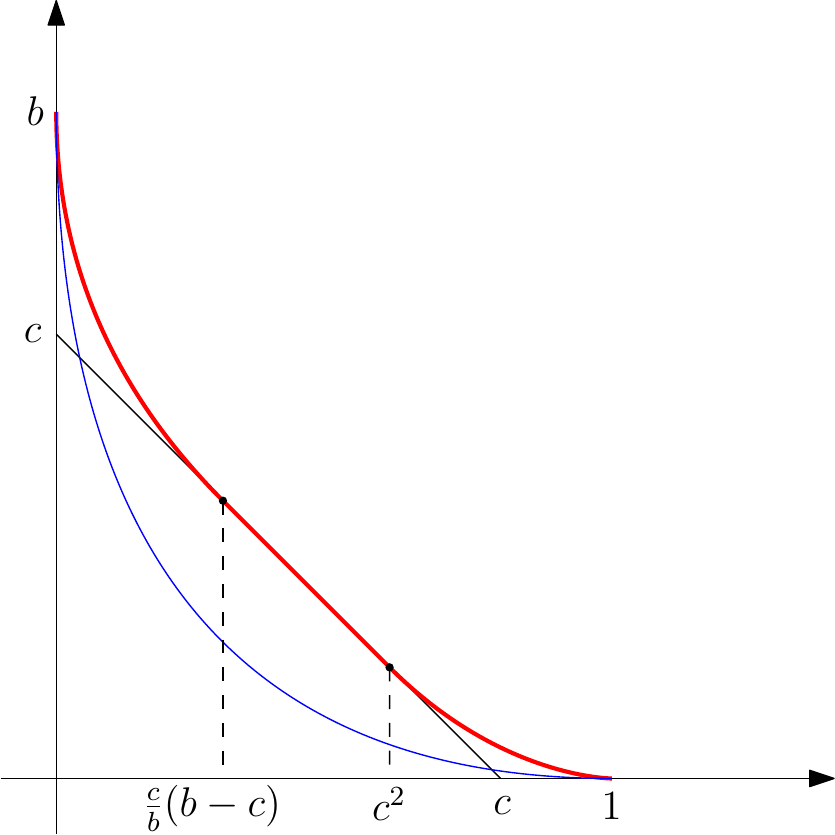}
\centering
\caption{Graph of $f_c$ in red.} \label{figure-4}
\end{figure}

Denote by $X_{f_c}$ the monotone toric domain such that $f_c$ is the boundary of its moment map minus the $w_1$ and $w_2$-axis. Then we have
\[\frac{c_{\rm Gr}(X_{\Omega})}{\text{Vol}(X_{\Omega})}\leq\frac{c_{\rm Gr}(X_{f_c})}{\text{Vol}(X_{f_c})}.\]
Meanwhile, by integrating along the graph $f_c(w_1)$, we get that 
\[\text{Vol}(X_{f_c}) = \frac{c^2}{2}+\frac{(b-c)^2c}{6b}+\frac{c(1-c)^2}{6}, \]
therefore, 
\[\frac{c_{\rm Gr}(X_{f_c})}{\text{Vol}(X_{f_c})}=\frac{c}{\text{Vol}(X_{f_c})}=\frac{6}{3c+\frac{(b-c)^2}{b}+(1-c)^2}.\]

Moreover, since $X_\Omega$ is geometrically convex with the $w_1$-intercept and $w_2$-intercept being $a(\Omega)=1$ and $b(\Omega)=b$, respectively, we have $\frac{b}{1+b}\leq c_{\rm Gr}(X_\Omega)\leq 1$, since the Gromov width of the domain with boundary $w_2=b(1-\sqrt{w_1})^2$ is $\frac{b}{1+b}$. Therefore, we get
\[\max\limits_{c\in\left[\frac{b}{1+b},1\right]}\left\{\frac{c_{\rm Gr}(X_{f_c})}{\text{Vol}(X_{f_c})}\right\}=\frac{6}{1+b}\]
where the maximum is obtained for $c=\frac{b}{1+b}$ i.e. for the domain whose boundary is given by $f(w_1)=b(1-\sqrt{w_1})^2$. Hence,
\[\text{ru}(X_\Omega)^2 \cdot \text{sys}(X_\Omega)=\frac{(1+b)^2c_{\rm Gr}(X_\Omega)^2}{4\text{Vol}(X_\Omega)^2}\leq 9.\]
Thus we completed the proof of the second conclusion. 
\end{proof}

\vspace*{3mm}
\bibliographystyle{amsplain}
\bibliography{biblio_convex}
\vspace*{3mm}

\end{document}